\definecolor{shadecolor}{RGB}{153,204,255}
\newtheorem{claimaaa}{Claim}
\newcommand{\rationals}{\ensuremath{{\mathbb{Q}}}}
\newcommand{\defhigh}[1]{\textsc{#1}}
\newcommand{\redsolovay}[1][\le]{\ensuremath{{#1}_{\mathrm{S}} } }
\newcommand{\redsolovaymonotone}[1][\le]{\ensuremath{{#1}_{\mathrm{S}}^{\mathrm{m}} } }
\newcommand{\redsolovaytotal}[1][\le]{\ensuremath{{#1}_{\mathrm{S}}^{\mathrm{tot}} } }
\newcommand{\redsolovayreal}[1][\le]{\ensuremath{{#1}_{\mathrm{S}}^{\mathbb{R}} } }
\newcommand{\notredsolovayreal}[1][\nleq]{\ensuremath{{#1}_{\mathrm{S}}^{\mathbb{R}} } }
\newcommand{\redclopen}[1][\le]{\ensuremath{{#1}_{\mathrm{cL}}^{\mathrm{open}} } }
\newcommand{\redcllocal}[1][\le]{\ensuremath{{#1}_{\mathrm{cL}}^{\mathrm{loc}} } }
\begin{document}

\title{Variants of Solovay reducibility \thanks{supported in part by the ANR project FLITTLA ANR-21-CE48-0023}}

\author{Ivan Titov}
\authorrunning{I.\ Titov}
\institute{Universität Heidelberg, Institut für Informatik, 69120 Heidelberg, Germany \and Université de Bordeaux, CNRS, Bordeaux INP, LaBRI, UMR 5800, F-33400 Talence, France}

\maketitle

\begin{abstract}
Solovay reducibility was introduced by Robert M.\ Solovay~\cite{Solovay-1975} in 1975 in terms of translation functions on rationals. It is a measure of relative approximation speed and thus also of relative randomness of reals. In the area of algorithmic randomness, Solovay reducibility has been intensively studied and several central results on left-c.e.\ reals have been obtained.

Outside of the left-c.e.\ reals, Solovay reducibility is considered to be behaved badly~\cite{Downey-Hirschfeldt-2010}. Proposals for variants of Solovay reducibility that are better suited for the investigation of arbitrary, not necessarily left-c.e. reals were made by Rettinger and Zheng~\cite{Zheng-Rettinger-2004}, and, recently, by Titov~\cite{Titov-2023} and by Kumabe and co-authors~\cite{Kumabe-etal-2020}, \cite{Kumabe-etal-2023}. These variants all coincide with the original version of Solovay reducibility on the left-c.e. reals. Furthermore, they are all defined in terms of translation functions. The latter translate between computable approximations in the case of Rettinger and Zheng, are monotone in the case of Titov, and are functions between reals in the case of Kumabe et al.  

In what follows, we derive new results on the mentioned variants and their relation to each other. In particular, we obtain that Solovay reducibility defined in terms of translation function on rationals implies Solovay reducibility defined in terms of translation functions on reals, and we show that the original version of Solovay reducibility is strictly weaker than its monotone variant.

Solovay reducibility and its variants mentioned so far have tight connections to Martin-Löf randomness, the strongest and most central notion of a random sequence. For the investigation of Schnorr randomness, total variants of Solovay reducibility have been introduced by Merkle and Titov~\cite{Merkle-Titov-2022} in 2022 and, independently, by Kumabe et al.~\cite{Kumabe-etal-2023} in 2024, the latter again via real-valued translation functions. In what follows, we show that total Solovay reducibility defined in terms of rational functions implies total Solovay reducibility defined in terms of real functions.
\end{abstract}


\section{Solovay reducibility and its variants}
We start with reviewing the concept of Solovay reducibility introduced by Solovay~\cite{Solovay-1975} in 1975 as a measure of relative randomness. Its original definition uses the notion of a translation function on rationals, or a $\text{$\mathbb{Q}$-translation}$ function, defined on the left cut of a real. Our notation is standard. All rationals and reals are supposed to be on the interval~$[0,1)$ if not stated otherwise. A left-c.e.\ approximation is a strictly increasing computable approximation.
Unexplained notation can be found in Downey and Hirschfeldt~\cite{Downey-Hirschfeldt-2010}.

\subsection{Solovay reducibility}
\begin{definition}[Solovay, 1975]\label{def:Solovay-reducibility}
The \defhigh{left cut} of a real~$\beta$ is the set~$\mathrm{LC}(\beta)$ of all rationals~$y$ where~$0 \le y < \beta$. 
A $\mathbb{Q}$-\defhigh{translation function} from a real~$\beta$ to a real~$\alpha$ is a partially computable function $g$ from the set~$\rationals \cap [0,1)$ to itself such that for every~$q\in\mathrm{LC}(\beta)$ the value $g(q)$ is defined and fulfills~$g(q)<\alpha$, and it holds that
    \begin{equation}\label{eq:Q-translation-function}
        \lim\limits_{q\nearrow\beta} g(q) = \alpha,
    \end{equation}
    where $\lim\limits_{q\nearrow\beta}$ denotes the left limit.
    
    A real~$\alpha$ is \defhigh{Solovay reducible} to a real~$\beta$, also written as~$\alpha\redsolovay\beta$, if there is a real constant~$c$ and a $\mathbb{Q}$-translation function $g$ from~$\beta$ to~$\alpha$ such that, for all~$q< \beta$, it holds that
    \begin{equation}\label{eq:Solovay-reducibility}
        \alpha - g(q)<c(\beta - q).
    \end{equation}
\end{definition}
A~$\mathbb{Q}$-translation function maps rationals to rationals~--- in contrast to~$\mathbb{R}$-translation functions, to be introduced in the next section, which map reals to reals. We refer to inequality~\eqref{eq:Solovay-reducibility} as \defhigh{Solovay condition} and to the constant~$c$ that occurs in it as \defhigh{Solovay constant}.

\medskip 
Recall that a function~$f$ is \defhigh{Lipschitz continuous} on some interval~$I$ if there exists a constant~$d$, called \defhigh{Lipschitz constant}, such that for every~$p,q\in I$ in the domain of~$f$ it holds that~$|f(q)-f(p)|< d|q-p|$. 

The Solovay condition~\eqref{eq:Solovay-reducibility} resembles a localized version of the defining condition of Lipschitz continuity, where instead of arbitrary pairs of arguments, we consider only pairs with second component~$\beta$. Accordingly, there are close relations between $\mathbb{Q}$-translation functions that are Lipschitz continuous and those that witness Solovay reducibility.
\begin{proposition}\label{rationals-Lipschitz}
    Let~$\alpha$ and~$\beta$ be two reals.
    \begin{enumerate}[(a)]
        \item \label{rationals-Lipschitz:direct}
        If~$g$ is a Lipschitz continuous $\mathbb{Q}$-translation function from~$\beta$ to~$\alpha$, then~$\alpha\redsolovay\beta$ via~$g$.
        \item \label{rationals-Lipschitz:leftce}
        If~$\alpha$ and~$\beta$ are left-c.e.\ and $\alpha\redsolovay\beta$, then $\alpha\redsolovay\beta$ via some Lipschitz continuous $\mathbb{Q}$-translation function. Moreover, the function can be chosen strictly increasing.
    \end{enumerate}
\end{proposition}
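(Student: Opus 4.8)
The plan is to treat the two parts separately, with part~(a) being a short limiting argument and part~(b) requiring an explicit construction.

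For part~(a), let $d$ be a Lipschitz constant for $g$ on $\mathrm{LC}(\beta)$ and fix a rational $q<\beta$. For every rational $p$ with $q<p<\beta$, Lipschitz continuity gives $g(p)-g(q)<d(p-q)$. Letting $p\nearrow\beta$ along rationals and using $g(p)\to\alpha$ yields $\alpha-g(q)\le d(\beta-q)$. The only subtlety is that passing to the limit degrades the strict inequality to a non-strict one; I would absorb this by choosing the Solovay constant $c:=d+1>d$, so that $\alpha-g(q)\le d(\beta-q)<c(\beta-q)$, the last step being valid because $\beta-q>0$. Together with the hypothesis that $g$ is a $\mathbb{Q}$-translation function from $\beta$ to $\alpha$, this shows $\alpha\redsolovay\beta$ via $g$.

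For part~(b) the key observation is that, by part~(a), it suffices to construct \emph{some} Lipschitz continuous, strictly increasing $\mathbb{Q}$-translation function from $\beta$ to $\alpha$; the Solovay condition then comes for free. First I would fix a strictly increasing computable sequence of rationals $b_s\nearrow\beta$ and, from the given witness $g$ with Solovay constant $c$, form the running maxima $a_s:=\max_{t\le s}g(b_t)$. Since each $g(b_t)$ is defined (as $b_t\in\mathrm{LC}(\beta)$) and $g(b_t)\to\alpha$, the sequence $(a_s)$ is a computable nondecreasing sequence of rationals with $a_s<\alpha$ and $a_s\nearrow\alpha$, and evaluating the Solovay condition at $q=b_s$ gives the cumulative bound $\alpha-a_s\le\alpha-g(b_s)<c(\beta-b_s)$. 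Replacing $c$ by any rational upper bound for it, I may assume $c$ is rational. Geometrically, the cumulative bound says that every point $(b_s,a_s)$ lies strictly above the line $L\colon y=\alpha-c(\beta-x)$ of slope $c$ through the corner $(\beta,\alpha)$, and below $y=\alpha$. I would then select vertices greedily: put $(x_0,y_0):=(b_0,a_0)$ and, having chosen $(x_k,y_k)=(b_{s_k},a_{s_k})$, let $s_{k+1}$ be the least index $s>s_k$ for which $0<a_s-y_k<c(b_s-x_k)$, that is, for which the segment to $(b_s,a_s)$ has slope in $(0,c)$. The polygonal function $g'$ interpolating the vertices $(x_k,y_k)$, extended linearly to the left of $x_0$ (and truncated at $0$) and left undefined from $\beta$ on, is then strictly increasing, Lipschitz with constant $c$, rational-valued at rationals, and partially computable; from $y_k<\alpha$ and $y_k\nearrow\alpha$ it satisfies $g'(q)<\alpha$ on $\mathrm{LC}(\beta)$ and $\lim_{q\nearrow\beta}g'(q)=\alpha$, so it is a Lipschitz continuous, strictly increasing $\mathbb{Q}$-translation function as required.

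The crux of the argument — and the step I expect to be the main obstacle — is to verify that the greedy search always succeeds, i.e.\ that a suitable $s_{k+1}$ exists and that $x_k\nearrow\beta$ and $y_k\nearrow\alpha$. This is exactly where the strictness in the cumulative bound is used: since $(x_k,y_k)$ lies strictly above $L$, the slope $(\alpha-y_k)/(\beta-x_k)$ of the line from $(x_k,y_k)$ to the corner is strictly less than $c$, so the slopes $(a_s-y_k)/(b_s-x_k)$, which tend to this limiting value and become positive once $a_s>y_k$, eventually fall into $(0,c)$; hence $s_{k+1}$ exists and the search halts. Because the indices $s_k$ are strictly increasing they tend to infinity, forcing $x_k=b_{s_k}\nearrow\beta$ and $y_k=a_{s_k}\nearrow\alpha$, which closes the construction. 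Finally, the Lipschitz bound for $g'$ holds with constant $c$ in the strict sense of the definition: any pair $p,q<\beta$ lies in an interval $[0,\max(p,q)]$ meeting only finitely many segments, all of slope strictly below $c$, so $|g'(p)-g'(q)|<c|p-q|$.
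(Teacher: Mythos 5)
Your argument is correct, and part~(b) follows a genuinely different route from the paper's. For~(a) the paper says only that the proof is standard and leaves it to the reader; your limiting argument, with the bump from $d$ to $c:=d+1$ to restore the strict inequality after passing to the limit, is precisely that standard proof. For~(b) the paper is a two-step affair: it cites Proposition~9.1.7 of Downey and Hirschfeldt to obtain a constant $d>c$ and linked left-c.e.\ approximations $a_0,a_1,\dots\nearrow\alpha$ and $b_0,b_1,\dots\nearrow\beta$ satisfying $a_{n+1}-a_n<d(b_{n+1}-b_n)$, and then performs exactly the piecewise-linear interpolation you perform. You bypass the citation: starting from an arbitrary left-c.e.\ approximation of $\beta$, you manufacture the companion approximation of $\alpha$ as running maxima of values of the given witness $g$, note the strict cumulative bound $\alpha-a_s<c(\beta-b_s)$ inherited from the Solovay condition, and extract by greedy search a subsequence along which consecutive slopes lie in $(0,c)$. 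This is in effect an inline re-proof of the cited proposition, and your termination check is sound: the limit slope $(\alpha-y_k)/(\beta-x_k)$ is strictly below $c$ precisely because the Solovay inequality is strict, and $a_s>y_k$ eventually because $a_s\to\alpha>y_k$; the chord-slope argument for the strict Lipschitz bound across finitely many segments also goes through. The trade-off is self-containedness versus brevity; the interpolation endgame and the appeal to part~(a) to recover the Solovay condition are identical in both proofs. One boundary detail deserves a repair: extending the first segment's line to the left of $x_0$ either produces negative values or, after truncation at $0$, a constant stretch where $g'$ is not strictly increasing. The cheapest fix is to arrange $b_0=0$, so that $x_0=0$ and no left extension is needed; note that the paper's own construction, which sets its interpolant equal to $a_0$ on an initial interval while claiming strict monotonicity, suffers from the same blemish.
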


\begin{proof}
\begin{enumerate}[(a)]
    \item\label{part-one} The proof is standard and is left to the interested reader.

    \item By~\cite[Proposition~9.1.7]{Downey-Hirschfeldt-2010}, there exist a constant $d>c$ and two left-c.e.\ approximations~$a_0,a_1,\dots$ and~$b_0,b_1,\dots$ of~$\alpha$ and $\beta$, respectively, that fulfill
    \begin{equation}\label{eq:9.1.7}
        a_{n+1} - a_n < d(b_{n+1} - b_n)\quad\text{for every }n.
    \end{equation}
    Then, the function $g$ from $\mathrm{LC}(\beta)$ to $\mathrm{LC}(\alpha)$ defined by
    \[g(q) = a_n + \frac{a_{n+1}-a_n}{b_{n+1}-b_n}(q - b_n)\quad\text{for all rationals }q\in [b_n,b_{n+1})\]
    and $g(q) = a_0$ for all rationals $q\in [0,a_0)$
    is Lipschitz continuous with the Lipschitz constant~$d$ by~\eqref{eq:9.1.7}. Further, $g$ is strictly increasing since the sequences $a_0,a_1,\dots$ and $b_0,b_1,\dots$ are strictly increasing.
    Finally, $g$ fulfills \eqref{eq:Q-translation-function} because, for arbitrarily small distance $\beta - b_n$, we have $0<\alpha - g(q)<d(\beta - b_n)$ for all $q\in [a_n,\alpha)$.

    Therefore, $g$ is a Lipschitz continuous strictly increasing $\mathbb{Q}$-translation function from $\beta$ to $\alpha$, hence, by the statement~\eqref{part-one} of the current proposition, witnesses~$\alpha\redsolovay\beta$.
\end{enumerate}
\end{proof}

\medskip

In 2022, Merkle and Titov introduced~\cite{Merkle-Titov-2022} a version of Solovay reducibility via a totally defined $\mathbb{Q}$-translation function.
\begin{definition}[Merkle, Titov, 2022]
    A real~$\alpha$ is \defhigh{total Solovay reducible} to a real~$\beta$, written $\alpha\redsolovaytotal\beta$, if $\alpha\redsolovay\beta$ via a $\mathbb{Q}$-translation function $g$ which is totally computable on~$[0,1)$.
\end{definition}
\subsection{Monotone Solovay reducibility}

In 2024, Titov~\cite{Titov-2023} proposed the following monotone variant of Solovay reducibility, which coincides with~$\redsolovay$ on the set on left-c.e.\ reals.
\begin{definition}
    A real~$\alpha$ is~\defhigh{monotone Solovay reducible} to a real~$\beta$, written~$\alpha\redsolovaymonotone\beta$, if~$\alpha$ is~\defhigh{Solovay reducible} to~$\beta$ via a $\mathbb{Q}$-translation function that is nondecreasing.
\end{definition}

By definition, Solovay reducibility is implied by its monotone variant. In the remainder of this section, we demonstrate that the reverse implication does not hold in general, i.e., $\redsolovaymonotone$ is strictly stronger than~$\redsolovay$. For a start, we state somewhat technical observations about the behavior of monotone $\mathbb{Q}$-translation functions, their proofs are left to the interested reader.
\begin{lemma}\label{monotone-function-left-and-right-to-limit}
    Every monotone $\mathbb{Q}$-translation function $g$ from a real $\beta$ to a real $\alpha$ satisfies the following properties.
    \begin{enumerate}[(a)]
        \item \label{monotone-function-right-to-limit}
        For every $q\geq\beta$ such that $g(q)$ is defined, we have $g(q)\geq\beta$.
        \item \label{monotone-function-left-to-limit}
        If $q_0,q_1,\dots$ is a sequence of rationals in $[0,\beta)$ such that $\lim\limits_{n\to\infty}g(q_n) = \alpha$, then $\lim\limits_{n\to\infty}q_n = \beta$.
    \end{enumerate}
\end{lemma}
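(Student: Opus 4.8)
The plan is to route both parts through a single structural fact: a nondecreasing $\mathbb{Q}$-translation function is governed by its supremum over the left cut, and the defining limit condition~\eqref{eq:Q-translation-function} pins that supremum to $\alpha$. So I would first record that, since $g$ is nondecreasing with $g(p) < \alpha$ for every rational $p \in \mathrm{LC}(\beta)$ while $\lim_{p \nearrow \beta} g(p) = \alpha$, we have $\sup\{ g(p) : p \in \mathrm{LC}(\beta) \} = \alpha$, a value approached but never attained on the left cut. Everything else is extracted from this observation together with monotonicity.

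For part~(\ref{monotone-function-right-to-limit}) the goal is the displayed inequality $g(q) \geq \beta$ for $q \geq \beta$ with $g(q)$ defined. I would begin with monotonicity: for such $q$ we have $g(q) \geq g(p)$ for every $p \in \mathrm{LC}(\beta)$ in the domain, hence $g(q) \geq \sup\{ g(p) : p \in \mathrm{LC}(\beta) \} = \alpha$. The main obstacle is exactly the step from here to the displayed conclusion, i.e.\ bridging from the supremum value to $\beta$. I expect this to be where the argument genuinely stalls, because that bridge is available only under the extra relation $\alpha \geq \beta$, which is not among the hypotheses; without it the inequality simply fails. The nondecreasing function $g(q) = q/2$ is a $\mathbb{Q}$-translation function from $\beta = 1/2$ to $\alpha = 1/4$, and there $g(\beta) = 1/4 < \beta$, so no argument can deliver $g(q) \geq \beta$ in full generality. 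What the monotonicity-plus-limit reasoning does secure unconditionally is the bound with threshold $\alpha$, and the remaining gap to $\beta$ is the obstacle the proof must confront.

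For part~(\ref{monotone-function-left-to-limit}) I would argue by contraposition, and here I expect no essential difficulty. If $q_0, q_1, \dots$ fails to converge to $\beta$, then some subsequence stays below a fixed rational threshold $\beta - \varepsilon$, and by monotonicity the images of that subsequence are bounded above by $L := \sup\{ g(p) : p \leq \beta - \varepsilon \}$. The one point to verify carefully is the strict inequality $L < \alpha$: were $L = \alpha$, then any $p \in (\beta - \varepsilon, \beta)$ would satisfy $g(p) \geq L = \alpha$ by monotonicity, contradicting $g(p) < \alpha$. With $L < \alpha$ in hand, the subsequence of images is bounded away from $\alpha$, contradicting $\lim_{n} g(q_n) = \alpha$; hence $q_n \to \beta$. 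Thus the only real sticking point across the whole lemma is the threshold mismatch in part~(\ref{monotone-function-right-to-limit}) between the attainable bound $\alpha$ and the $\beta$ written in the statement.
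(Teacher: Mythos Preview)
Your diagnosis is correct, and in fact the paper does not supply its own proof here: the lemma is introduced with the remark that ``their proofs are left to the interested reader.'' So there is nothing to compare your approach against at the level of argument.

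What you have spotted in part~(\ref{monotone-function-right-to-limit}) is a genuine typo in the statement: the conclusion should read $g(q)\geq\alpha$, not $g(q)\geq\beta$. Your counterexample $g(q)=q/2$, $\beta=1/2$, $\alpha=1/4$ is valid and kills the literal statement. The intended version is confirmed by the way the paper actually \emph{uses} the lemma: in the proof of Proposition~\ref{leftce-standalone-equivalence-monotone}, implication $(iii)\Rightarrow(i)$, the lemma is invoked in contrapositive form to conclude that $q_{i_n}<\beta$ from the fact that $g(q_{i_n})<a_j<\alpha$. That inference requires precisely ``$q\geq\beta\Rightarrow g(q)\geq\alpha$,'' which is exactly the bound your monotonicity-plus-supremum argument delivers.

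Your argument for part~(\ref{monotone-function-left-to-limit}) is correct as written. The contrapositive via a strict upper bound $L<\alpha$ on a bounded-away subsequence is the natural route, and your justification of the strict inequality $L<\alpha$ is sound.
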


The next proposition shows that the monotone $\mathbb{Q}$-translation function can be only from a left-c.e.\ to a left-c.e.\ or from a nonleft-c.e.\ to a nonleft-c.e.\ real.

\begin{proposition}\label{leftce-standalone-equivalence-monotone}
    Let $\alpha,\beta\in \mathbb{R}$, where $\alpha$ is left-c.e. Then the following statements are equivalent:
    \begin{enumerate}[(i)]
        \item $\beta$ is left-c.e.;
        \item there exists a nondecreasing $\mathbb{Q}$-translation function from $\alpha$ to $\beta$;
        \item there exists a nondecreasing $\mathbb{Q}$-translation function from $\beta$ to $\alpha$.
    \end{enumerate}
\end{proposition}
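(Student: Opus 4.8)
The plan is to prove the two equivalences (i)$\Leftrightarrow$(ii) and (i)$\Leftrightarrow$(iii) by four implications; three of them are routine and the implication (iii)$\Rightarrow$(i) is the crux. Throughout I use that a real is left-c.e.\ precisely when its left cut is c.e.

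For (i)$\Rightarrow$(ii) and (i)$\Rightarrow$(iii), fix strictly increasing computable approximations $a_0,a_1,\dots$ of $\alpha$ and $b_0,b_1,\dots$ of $\beta$. To witness (iii), I would let $g$ be the step function that, on input $q$, searches for $n_0(q)=\min\{n:b_n>q\}$ and outputs $a_{n_0(q)}$. This search halts exactly when $q<\beta$, so $g$ is partially computable and defined on $\mathrm{LC}(\beta)$; it is nondecreasing because $n_0$ is; it satisfies $g(q)<\alpha$; and $\lim_{q\nearrow\beta}g(q)=\alpha$ since $n_0(q)\to\infty$ as $q\to\beta^-$. The witness for (ii) is obtained by the same construction with the roles of the two approximations exchanged.

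For (ii)$\Rightarrow$(i), let $g$ be a nondecreasing $\mathbb{Q}$-translation function from $\alpha$ to $\beta$ and consider the computable sequence $g(a_0),g(a_1),\dots$. Each $a_n$ lies in $\mathrm{LC}(\alpha)$, so every $g(a_n)$ is defined, rational, and smaller than $\beta$; monotonicity of $g$ makes the sequence nondecreasing, and since $a_n\nearrow\alpha$ and $\lim_{q\nearrow\alpha}g(q)=\beta$, it converges to $\beta$ from below. Thus $g(a_0),g(a_1),\dots$ is a computable nondecreasing rational sequence with supremum $\beta$, which witnesses that $\beta$ is left-c.e.

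The implication (iii)$\Rightarrow$(i) is the main obstacle, because here the given function $g$ runs from $\beta$ to $\alpha$, i.e.\ in the direction opposite to the approximation of $\beta$ we want to build, so we cannot simply feed an enumeration of $\mathrm{LC}(\beta)$ into $g$. Instead I would enumerate $\mathrm{LC}(\beta)$ directly by means of the equivalence
\[
q<\beta \quad\Longleftrightarrow\quad g(q)\downarrow \ \text{ and }\ g(q)<\alpha .
\]
The forward direction is immediate from the translation property. For the backward direction I would invoke Lemma~\ref{monotone-function-left-and-right-to-limit}(a), which gives $g(q)\ge\alpha$ whenever $q\ge\beta$ and $g(q)$ is defined, so the right-hand side then fails. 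Both conjuncts on the right are c.e.: $g(q)\downarrow$ by running $g$, and $g(q)<\alpha$ by searching for some $a_n>g(q)$, which is exactly where left-c.e.-ness of $\alpha$ enters. Dovetailing these searches enumerates $\mathrm{LC}(\beta)$, so $\beta$ is left-c.e., completing the argument.
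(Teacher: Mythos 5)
Your proof is correct, and while three of its four implications coincide with the paper's own argument (the step-function witnesses for (i)$\Rightarrow$(ii) and (i)$\Rightarrow$(iii), and the push-forward sequence $g(a_0),g(a_1),\dots$ for (ii)$\Rightarrow$(i), are essentially the same constructions), your treatment of the crux (iii)$\Rightarrow$(i) takes a genuinely different and simpler route. The paper builds a left-c.e.\ approximation $q_{i_0},q_{i_1},\dots$ of $\beta$ by a stepwise search through an enumeration of $\mathrm{dom}(g)$, which requires a termination argument at every step and both halves of Lemma~\ref{monotone-function-left-and-right-to-limit}: part~(a) to keep the constructed sequence inside $[0,\beta)$, and part~(b) to conclude that it converges to $\beta$ from the convergence of the values $g(q_{i_n})$ to $\alpha$. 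You instead isolate the characterization $q<\beta \iff \bigl(g(q)\downarrow \text{ and } g(q)<\alpha\bigr)$ and enumerate $\mathrm{LC}(\beta)$ directly by dovetailing, using the left-c.e.-ness of $\alpha$ to make $g(q)<\alpha$ a c.e.\ event; this needs only part~(a) of the lemma, dispenses with the termination analysis and the auxiliary points $d_n$ entirely, and rests on the standard fact --- which the paper itself invokes, e.g.\ in the proof of Theorem~\ref{main-result} --- that a real is left-c.e.\ iff its left cut is c.e. What your approach gives up is an explicit monotone approximation of $\beta$ (you get c.e.-ness of the cut rather than a constructed sequence), but for the statement being proved that is immaterial. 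One point to flag: Lemma~\ref{monotone-function-left-and-right-to-limit}(a) as printed asserts $g(q)\geq\beta$, an evident typo for $g(q)\geq\alpha$; your reading is the intended one and matches how the paper itself applies the lemma, so your appeal to it is sound.
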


\begin{proof}
    Let $a_0,a_1,\dots$ where $a_0 = 0$ be a left-c.e.\ approximation of~$\alpha$.
    
    \begin{itemize}
        \item $(i)\implies(ii)$ and $(i)\implies(iii)$. In case $\beta$ is left-c.e., let $b_0,b_1,\dots$ where $b_0 = 0$ be a left-c.e.\ approximation of~$\beta$. Then the functions
        \[g(q) = a_{\min\{m: b_m\geq q\}}\makebox[4em]{and}h(q) = b_{\min\{m: a_m\geq q\}}\]
        are nondecreasing translation functions from~$\beta$ to $\alpha$ and from~$\alpha$ to $\beta$, respectively.

        \item $(ii)\implies(i)$. If there exists a nondecreasing $\mathbb{Q}$-translation function $g$ from $\alpha$ to $\beta$, then the sequence~$g(a_0),g(a_1),\dots$ is a nondecreasing computable approximation of~$\beta$, hence~$\beta$ is left-c.e.

        \item $(iii)\implies(i)$. If there exists a nondecreasing $\mathbb{Q}$-translation function $g$ from $\beta$ to $\alpha$, then, for a fixed computable enumeration $q_0=0,q_1,\dots$ of $\mathrm{dom}(g)$, we define $i_0,i_1,\dots$ step-wise: at step $0$, we set $i_0 = 0$; at step $n+1$, we search for an index~$i$ such that 
        \begin{equation}\label{eq:monotone-tranlsation-from-beta-to-alpha}
            q_{i_n} < q_i\text{, } g(q_i)>a_{n},\text{ and there is an index }j>i_n\text{ such that }g(q_i)<a_j
        \end{equation}
        and set~$i_{n+1} = i$.

        Then every step terminates since, for every $n$, all rationals in the interval $(q_{i_n},\beta)\cap(d_n,\beta)$, where $d_n$ is some point in $(0,\beta)$ such that $g(d_n) \in (a_{i_n},\alpha)$, fulfill~\eqref{eq:monotone-tranlsation-from-beta-to-alpha}. Therefore, the sequence $q_{i_0},q_{i_1},\dots$ is infinite and computable.
        It is increasing by $q_{i_n} < q_{i_{n+1}}$.
        Further, for every $j$, we have $q_{i_n}< a_j < \alpha$ for some $j$, so, by Lemma~\ref{monotone-function-left-and-right-to-limit}\eqref{monotone-function-right-to-limit}, the sequence $q_{i_0},q_{i_1},\dots$ lies in~$[0,\beta)$.
        Finally, it fulfills $g(q_{i_{n+1}}) > a_{n}$ for every $n$, hence the sequence $g(q_{i_0}),g(q_{i_1}),\dots$ tends to $\alpha$; and thus, by Lemma~\ref{monotone-function-left-and-right-to-limit}\eqref{monotone-function-left-to-limit}, the sequence $q_{i_0},q_{i_1},\dots$ tends to~$\beta$. Therefore, it is a left-c.e.\ approximation of $\beta$, hence $\beta$ is left-c.e.
    \end{itemize}   
\end{proof}

\begin{corollary}\label{leftce-standalone-monotone}
    The set of left-c.e.\ reals is $\redsolovaymonotone$-closed downwards and upwards~in~$\mathbb{R}$.
\end{corollary}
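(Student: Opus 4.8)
The plan is to derive both closure properties directly from Proposition~\ref{leftce-standalone-equivalence-monotone}; essentially all the content has been front-loaded into that proposition, so the only work is to unfold the definition of~$\redsolovaymonotone$ and to match the two directions of translation function with conditions~(ii) and~(iii). I will write the statement for generic reals $\mu,\nu$ to avoid a clash with the proposition's own $\alpha,\beta$, and first record the one observation needed about the definition: if $\mu\redsolovaymonotone\nu$, then by definition $\mu$ is Solovay reducible to~$\nu$ via a nondecreasing $\mathbb{Q}$-translation function, which is in particular a $\mathbb{Q}$-translation function from~$\nu$ to~$\mu$. Since the left-c.e.\ conclusions in Proposition~\ref{leftce-standalone-equivalence-monotone} depend only on the \emph{existence} of a nondecreasing $\mathbb{Q}$-translation function and not on the Solovay condition, the mere existence of such a function from~$\nu$ to~$\mu$ is all that is required.

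For downward closure, I would suppose $\nu$ is left-c.e.\ and $\mu\redsolovaymonotone\nu$, so that a nondecreasing $\mathbb{Q}$-translation function from~$\nu$ to~$\mu$ exists. Applying Proposition~\ref{leftce-standalone-equivalence-monotone} with the distinguished left-c.e.\ real taken to be~$\nu$ and the arbitrary real taken to be~$\mu$, this function is exactly a witness for condition~(ii) there (a nondecreasing function from the left-c.e.\ real to the other real); hence by the equivalence $(i)\Leftrightarrow(ii)$ the real~$\mu$ is left-c.e. For upward closure, I would suppose $\mu$ is left-c.e.\ and $\mu\redsolovaymonotone\nu$, which again yields a nondecreasing $\mathbb{Q}$-translation function from~$\nu$ to~$\mu$. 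Applying the same proposition, but now with the distinguished left-c.e.\ real taken to be~$\mu$ and the arbitrary real taken to be~$\nu$, this function is exactly a witness for condition~(iii) (a nondecreasing function from the other real to the left-c.e.\ real); hence by $(i)\Leftrightarrow(iii)$ the real~$\nu$ is left-c.e.

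There is no genuine obstacle here. The only point requiring care is the orientation of the translation function: both directions of $\redsolovaymonotone$ produce a function from~$\nu$ to~$\mu$, and it is the choice of which endpoint is treated as the known left-c.e.\ real that determines whether one invokes condition~(ii) or condition~(iii) of Proposition~\ref{leftce-standalone-equivalence-monotone}. With both cases settled, the set of left-c.e.\ reals is $\redsolovaymonotone$-closed both downwards and upwards in~$\mathbb{R}$.
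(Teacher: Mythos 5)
Your proposal is correct and matches the paper's intended argument: the corollary is stated there as an immediate consequence of Proposition~\ref{leftce-standalone-equivalence-monotone}, with no separate proof given, and your instantiations ($(ii)\Rightarrow(i)$ for downward closure, $(iii)\Rightarrow(i)$ for upward closure) are exactly the right unfolding. Your observation that both directions of $\redsolovaymonotone$ yield a translation function from~$\nu$ to~$\mu$, so that only the choice of which endpoint is the known left-c.e.\ real varies, is precisely the one point of care, and you handled it correctly.
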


In contrast, the set of left-c.e.\ reals is not $\redsolovay$-closed downwards, as we will see in the next proposition.

\begin{proposition}\label{eq:1-solovay-reducible-to-nonleft-ce}
    There exist a nonleft-c.e.\ real~$\beta$ such that~$1\redsolovay\beta$.
\end{proposition}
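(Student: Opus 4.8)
The plan is to avoid any priority or injury construction and instead exhibit a single, fixed, totally computable (and highly non-monotone) candidate translation function $g$, then locate a suitable nonleft-c.e.\ target $\beta$ by a counting argument. Fix the Solovay constant $c=1$ and a repetition-free computable enumeration $q_0,q_1,\dots$ of $\rationals\cap[0,1)$. Set $d(q_n)=2^{-n-2}$ and $g(q_n)=1-d(q_n)$. Then $g$ is a total computable function from $\rationals\cap[0,1)$ into $\rationals\cap[3/4,1)$; in particular $g(q)<1$ for every rational argument, and $g$ is defined everywhere on $\mathrm{LC}(\beta)$ whatever $\beta$ turns out to be.

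Next I would isolate the reals for which $g$ already witnesses the reduction. Observe that, since $\alpha=1$, the Solovay condition $1-g(q)<c(\beta-q)$ for all $q<\beta$ by itself forces the limit condition, because $0<1-g(q)=d(q)<c(\beta-q)\to 0$ as $q\nearrow\beta$. Hence it suffices to place $\beta$ in
\[ V=\{x\in[0,1):\ d(q)<c(x-q)\text{ for every }q\in\rationals\cap[0,x)\}. \]
The key observation is that $x\notin V$ happens precisely when some rational $q<x$ satisfies $c(x-q)\le d(q)$, i.e.\ when $x\in(q,\,q+d(q)/c]$; therefore $[0,1)\setminus V$ is contained in the countable union $\bigcup_{n}(q_n,\,q_n+d(q_n)/c]$.

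The main step is then a short measure argument. The removed set has Lebesgue measure at most $\sum_n d(q_n)/c=\sum_n 2^{-n-2}=1/2<1$, so $V$ has measure at least $1/2$ and in particular is uncountable. Since there are only countably many left-c.e.\ reals, I can choose $\beta\in V$ that is not left-c.e. For this $\beta$ the function $g$ is defined on all of $\mathrm{LC}(\beta)$ with values below $1$, satisfies the Solovay condition with constant $c$ by virtue of $\beta\in V$, and hence has left limit $1$ at $\beta$. Thus $g$ is a $\rationals$-translation function from $\beta$ to $1$ witnessing $1\redsolovay\beta$, which proves the proposition.

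I expect the genuine difficulty to be conceptual rather than computational: recognizing that no approximation of $\beta$ from below is needed (such an approximation would make $\beta$ left-c.e.), because one fixed non-monotone $g$ can approach $1$ near a whole positive-measure set of targets simultaneously. The only two points that must be arranged carefully are the choice of a summable sequence $d(q_n)$, so that the bad targets have total length strictly below $1$, and the remark that the Solovay inequality alone already yields the required left limit; everything else is routine. It is worth emphasizing that non-monotonicity of $g$ is indispensable here: by Proposition~\ref{leftce-standalone-equivalence-monotone}, a nondecreasing translation function from $\beta$ to the left-c.e.\ real $1$ would force $\beta$ itself to be left-c.e.
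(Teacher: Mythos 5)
Your proof is correct and is essentially the paper's own argument: both fix a computable enumeration $q_0,q_1,\dots$, define the fixed non-monotone function $g(q_n)=1-\varepsilon_n$ with summable defects (you use $\varepsilon_n=2^{-n-2}$, the paper $2^{-2n}$), excise the intervals of total measure strictly below $1$, and pick a nonleft-c.e.\ real $\beta$ in the positive-measure complement, on which the Solovay condition with constant $1$ holds and forces the left-limit requirement. Your explicit remark that the Solovay inequality alone yields $\lim_{q\nearrow\beta}g(q)=1$ is a point the paper leaves implicit, but the route is the same.
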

\begin{proof}
    We fix an enumeration~$q_1,q_2,\dots$ of all rationals in the unit interval and define a computable test~$I_n:=[q_n, q_n+2^{-2n}]$.
    Since~$\sum_{n=1}^{\infty}\mu(I_n) = \frac{1}{3}$, while the set of all left-c.e.\ reals has the Lebesgue measure $0$ because there are only countably many such reals, we can fix a nonleft-c.e.\ real~$\beta\notin\bigcup_{i=1}^{\infty}I_n$.

    On the one hand, the function $g$ defined by
    \begin{equation*}\label{eq:define-nonmonotone-translation-function-for-separation}
        g(q_n) = 1 - 2^{-2n}
    \end{equation*}
    is a $\mathbb{Q}$-translation function from~$\beta$ to~$1$.
    Moreover, for every $q_n<\beta$, we easily obtain from $\beta\notin I_n$ that $\beta > q_n+\mu(I_n) = q_n + 2^{-2n}$, and thus
    \[1 - g(q_n) = 1 - (1 - 2^{-2n}) = 2^{-2n} \in (0,\beta - q_n).\]
    Therefore we obtain that $1\redsolovay\beta$ with the constant $1$ via $g$.
\end{proof}

On the other hand, the existence of a nondecreasing $\mathbb{Q}$-translation function from the real~$\beta$ defined in the latter proof to~$1$ would imply by Proposition~\ref{leftce-standalone-equivalence-monotone} that $\beta$ is left-c.e., which cannot be true. 
Therefore, Proposition~\ref{eq:1-solovay-reducible-to-nonleft-ce} allows us to separate $\redsolovaymonotone$ and $\redsolovay$ on $\mathbb{R}$.
\begin{corollary}\label{monotone-is-strictly-stronger}
    There exist two reals~$\alpha$ and $\beta$ such that $\alpha\redsolovay\beta$ but $\alpha\nleq_{\mathrm{S}}^{\mathrm{m}}\beta$.
\end{corollary}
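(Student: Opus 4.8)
The plan is not to construct anything new but to reuse the real produced in Proposition~\ref{eq:1-solovay-reducible-to-nonleft-ce}. I would take $\alpha := 1$ and let $\beta$ be the nonleft-c.e.\ real supplied by that proposition, for which $1\redsolovay\beta$ already holds. The positive half of the desired separation is then immediate: Proposition~\ref{eq:1-solovay-reducible-to-nonleft-ce} states exactly $\alpha\redsolovay\beta$ for this choice.

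For the negative half I would argue by contradiction. Suppose $\alpha\redsolovaymonotone\beta$, that is, $1\redsolovaymonotone\beta$. By definition this requires a nondecreasing $\mathbb{Q}$-translation function from $\beta$ to $1$; the Solovay condition it additionally satisfies is irrelevant here, as only monotonicity is used. Since $1$ is computable, it is in particular left-c.e., so Proposition~\ref{leftce-standalone-equivalence-monotone} applies with its left-c.e.\ real taken to be $1$ and its second real taken to be $\beta$. The existence of the nondecreasing $\mathbb{Q}$-translation function from $\beta$ to $1$ is precisely clause~(iii) of that proposition, and the implication (iii)$\implies$(i) forces $\beta$ to be left-c.e. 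This contradicts the choice of $\beta$, so $1\nleq_{\mathrm{S}}^{\mathrm{m}}\beta$, which together with $1\redsolovay\beta$ yields the claimed pair.

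The only delicate point is the bookkeeping around the value $1$: the standing convention confining reals to $[0,1)$ has to be reconciled with using $\alpha = 1$ as the translation limit, and one should check that Proposition~\ref{leftce-standalone-equivalence-monotone} is genuinely invoked with its left-c.e.\ real being $1$ and that clause~(iii) is exactly what monotone reducibility delivers. I do not anticipate any substantive obstacle, however, since the corollary is a direct combination of the two preceding propositions, with all the real work already carried out in proving them.
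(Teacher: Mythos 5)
Your proposal is correct and matches the paper's own argument exactly: the paper also takes $\alpha = 1$ and the nonleft-c.e.\ real $\beta$ from Proposition~\ref{eq:1-solovay-reducible-to-nonleft-ce}, then notes that a nondecreasing $\mathbb{Q}$-translation function from $\beta$ to $1$ would make $\beta$ left-c.e.\ by Proposition~\ref{leftce-standalone-equivalence-monotone}, a contradiction. Your remark about reconciling $\alpha = 1$ with the standing convention that reals lie in $[0,1)$ is a fair observation, but the paper itself tacitly accepts this harmless deviation.
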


\subsection{Computable functions on reals}
\begin{definition}
    A sequence $q_0,q_1,\dots$ of rationals is called \defhigh{effective approximation} if it fulfills~$|q_n-q_{n+1}|<2^{-n}$ for every~$n$.
\end{definition}

Since every effective approximation~$q_0,q_1,\dots$ is a Cauchy sequence, it converges to some limit point~$x\in \mathbb{R}$. The following properties of effective approximations and their limits can be obtained straightforwardly.
\begin{lemma}\label{effective-distance-to-limit}
    \begin{enumerate}[(a)]
        \item\label{effective-distance-to-limit:direct}
        If $q_0,q_1,\dots$ is an effective approximation, then there exists a real~$x$ such that
        \begin{equation}\label{eq:effective-distance-to-limit:direct}
            |x - q_n| < 2^{-(n-1)}\text{ for all }n\in \mathbb{N}.
        \end{equation}
        In particular, $x = \lim\limits_{n\to\infty}q_n$. In that case, we also say that $q_0,q_1,\dots$ is an~\defhigh{effective approximation of }$x$.
        \item\label{effective-distance-to-limit:inverse}
        Let $q_0,q_1,\dots$ be an approximation of a real~$x$ that satisfies
        \begin{equation}\label{eq:effective-distance-to-limit:inverse}
            |q_{n+1} - q_n| < 2^{-n}\text{ for all }n\in \mathbb{N}.
        \end{equation}
        Then $q_0,q_1,\dots$ is an effective approximation of~$x$.
    \end{enumerate}
    
\end{lemma}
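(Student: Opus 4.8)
The plan is to treat part~\eqref{effective-distance-to-limit:direct} as the substantive case and to derive part~\eqref{effective-distance-to-limit:inverse} from it almost immediately. For part~\eqref{effective-distance-to-limit:direct}, I would first show that an effective approximation is a Cauchy sequence. Telescoping along the defining inequalities $|q_{k+1}-q_k|<2^{-k}$ and comparing with the geometric series, I obtain for all $m>n$ the bound $|q_m-q_n|\le\sum_{k=n}^{m-1}|q_{k+1}-q_k|<\sum_{k=n}^{\infty}2^{-k}=2^{-(n-1)}$. Since the right-hand side tends to $0$ as $n\to\infty$, the sequence is Cauchy and hence converges to some real~$x$.

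The main obstacle is obtaining the \emph{strict} inequality demanded by~\eqref{eq:effective-distance-to-limit:direct}. Passing to the limit $m\to\infty$ in the displayed estimate only yields the nonstrict bound $|x-q_n|\le 2^{-(n-1)}$, because strict inequalities need not survive a limit. To recover strictness I would split off one step with the triangle inequality, writing $|x-q_n|\le|x-q_{n+1}|+|q_{n+1}-q_n|$. Here the first summand is bounded by $2^{-n}$ using the nonstrict estimate at index $n+1$, while the second summand is strictly below $2^{-n}$ directly by the definition of an effective approximation. Adding the two bounds gives $|x-q_n|<2^{-n}+2^{-n}=2^{-(n-1)}$, as required, and the relation $x=\lim_{n\to\infty}q_n$ is then immediate since this bound tends to~$0$.

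For part~\eqref{effective-distance-to-limit:inverse}, note that the hypothesis~\eqref{eq:effective-distance-to-limit:inverse} is literally the defining condition of an effective approximation, so $q_0,q_1,\dots$ is an effective approximation. By part~\eqref{effective-distance-to-limit:direct} its limit~$x'$ satisfies the distance bound in~\eqref{eq:effective-distance-to-limit:direct}; since the sequence is assumed to converge to~$x$ and limits in~$\mathbb{R}$ are unique, we have $x'=x$, so $q_0,q_1,\dots$ is an effective approximation of~$x$ in the sense introduced in part~\eqref{effective-distance-to-limit:direct}. I expect no difficulty here beyond correctly invoking uniqueness of the limit, so the whole weight of the argument rests on the strictness trick of the second paragraph.
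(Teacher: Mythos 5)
Your proof is correct, and it is exactly the standard argument the paper has in mind: the paper states Lemma~\ref{effective-distance-to-limit} without proof (``can be obtained straightforwardly''), after noting precisely your first step, namely that an effective approximation is Cauchy via the telescoping geometric-series bound. Your extra care in recovering the strict inequality in~\eqref{eq:effective-distance-to-limit:direct} by splitting off one step with the triangle inequality is a genuine (and correctly handled) subtlety that the paper's ``straightforward'' gloss silently skips over.
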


Obviously, a real~$x$ is computable iff there exists a computable effective approximation of~$x$.

The class of Turing machines that, using an infinite sequence of finite strings (in our case, encoded rationals) as an oracle, returns another sequence of finite strings was formally defined by Weihrauch~\cite[Chapter~2]{Weihrauch-2000} under the name~``Turing machines of Type~2''. In what follows, we give a formal definition of a computable real function using a notion of a Turing machine of Type 2 specified for the sequences of rationals.

\begin{definition}[Weihrauch, 2000]
        A \defhigh{Turing Machine} $M$ \defhigh{of Type 2} is an oracle Turing machine that, for every oracle $(p_0,p_1,\dots)$, where $p_0,p_1,\dots$ are (appropriately finitely encoded) rationals, produces either an infinite sequence of rationals~$(q_0,q_1,\dots)$; in this case, we
        write $M^{(p_0,p_1,\dots)}\downarrow = (q_0,q_1,\dots)$ and
        say that~$M$ \defhigh{returns the sequence} $(q_0,q_1,\dots)$ \defhigh{from the input} $(p_0,p_1,\dots)$; or a finite set of rationals $(q_0,q_1,\dots,q_n)$; in the latter case, we say that $M^{(p_0,p_1,\dots)}$~\defhigh{is undefined}.

        If, for an effective approximation $(p_0,p_1,\dots)$ given as an oracle, $M$ returns at least $n+1$ first elements $(q_0,\dots,q_n)$ of some effective approximation  $(q_0,q_1,\dots)$, we say that \defhigh{the computation of }$M^{(p_0,p_1,\dots)}$ \defhigh{with tolerance }$2^{-(n-1)}$ \defhigh{halts} and call $p_n$ \defhigh{result of} $M^{(q_0,q_1,\dots)}$ \defhigh{with tolerance} $2^{-(n-1)}$.
        
        A real function~$f$ from some subset of~$\mathbb{R}$ to~$\mathbb{R}$ is \defhigh{computable} on some set~${X\subseteq \mathrm{dom}(f)}$ if there exists an oracle Turing machine $M$ such that, for every $x\in X$ and every effective approximation $p_0,p_1,\dots$ that converges to~$x$,
        $M^{(p_0,p_1,\dots)}$ returns an effective approximation $q_0,q_1,\dots$ of~$f(x)$.
\end{definition}

By~\cite[Theorem~4.3.1]{Weihrauch-2000}, computability of a real function implies its continuity.
\begin{proposition}
    If a real function~$f$ is computable on some interval~$(a,b)$, then~$f$ is continuous on~$(a,b)$.
\end{proposition}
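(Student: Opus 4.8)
The plan is to establish continuity of $f$ at an arbitrary point $x\in(a,b)$ by exploiting the \emph{finite use} principle for oracle Turing machines: any halting computation queries only finitely many cells of the oracle, so the first $n+1$ output cells depend only on a finite prefix of the input approximation. Fix $x\in(a,b)$ and $\varepsilon>0$; the goal is to produce $\delta>0$ such that $|f(x)-f(y)|<\varepsilon$ whenever $y\in(a,b)$ and $|x-y|<\delta$. First I would fix a \emph{fast} effective approximation $p_0,p_1,\dots$ of $x$, namely one with $|p_k-x|<2^{-(k+2)}$ for every $k$; such a sequence of rationals exists and is a genuine effective approximation, since $|p_k-p_{k+1}|\le|p_k-x|+|x-p_{k+1}|<2^{-k}$. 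This extra speed is what will later create enough slack for the gluing step.

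Let $M$ witness computability of $f$ on $(a,b)$, so that $M^{(p_0,p_1,\dots)}$ returns an effective approximation $q_0,q_1,\dots$ of $f(x)$. Choose $n$ with $2^{-(n-2)}<\varepsilon$; by Lemma~\ref{effective-distance-to-limit}\eqref{effective-distance-to-limit:direct} we then have $|f(x)-q_n|<2^{-(n-1)}$. The key observation is that the computation producing the first $n+1$ output cells $q_0,\dots,q_n$ halts after finitely many steps and hence queries the oracle at only finitely many positions; let $m$ be such that only $p_0,\dots,p_m$ are read. Set $\delta=2^{-(m+2)}$, shrinking it if necessary so that all points considered below remain in $(a,b)$.

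Given $y\in(a,b)$ with $|x-y|<\delta$, I would build an effective approximation $p'_0,p'_1,\dots$ of $y$ that agrees with $p_0,\dots,p_m$ on its first $m+1$ terms: put $p'_k=p_k$ for $k\le m$ and continue with rationals satisfying $|p'_k-y|<2^{-(k+2)}$ for $k>m$. The only nonroutine constraint is at the seam, where the triangle inequality together with the fast convergence of the $p_k$ and the bound $|x-y|<\delta$ yields $|p'_m-p'_{m+1}|=|p_m-p'_{m+1}|<2^{-m}$; hence $p'_0,p'_1,\dots$ is a genuine effective approximation whose limit is $y$. Since $M$ read only $p_0,\dots,p_m$ to emit $q_0,\dots,q_n$ and the oracle $p'$ agrees with $p$ on exactly these cells, $M^{(p'_0,p'_1,\dots)}$ emits the same $q_0,\dots,q_n$ as its first $n+1$ output cells. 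By correctness of $M$ on $y$, this common prefix lies in an effective approximation of $f(y)$, so Lemma~\ref{effective-distance-to-limit}\eqref{effective-distance-to-limit:direct} gives $|f(y)-q_n|<2^{-(n-1)}$, and therefore $|f(x)-f(y)|\le|f(x)-q_n|+|q_n-f(y)|<2\cdot 2^{-(n-1)}=2^{-(n-2)}<\varepsilon$.

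I expect the main obstacle to be this gluing step: one must splice a prefix aimed at $x$ onto a tail aimed at $y$ while preserving the effective-approximation inequality $|p'_k-p'_{k+1}|<2^{-k}$ across the seam. Choosing the approximation of $x$ to converge faster than the bare definition requires is precisely what supplies the needed slack; with a merely generic approximation the quantity $|p_m-x|$ could be as large as $2^{-(m-1)}$, and the seam inequality would fail. Everything else, including the verification that the spliced sequence is an effective approximation of $y$ and the two distance estimates, is routine given Lemma~\ref{effective-distance-to-limit}.
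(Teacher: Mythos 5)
Your proof is correct, but it is worth noting that the paper itself gives no argument here at all: it simply cites Theorem~4.3.1 of Weihrauch's \emph{Computable Analysis}, of which your finite-use argument is essentially the standard proof, here reconstructed and adapted to the paper's formulation of Type-2 machines on sequences of rationals. The skeleton is sound: the emission of $q_0,\dots,q_n$ happens at some finite stage, so only a finite oracle prefix $p_0,\dots,p_m$ is queried, and any oracle agreeing on that prefix produces the same first $n+1$ outputs; since the paper's definition of computability on $X=(a,b)$ only requires the \emph{limit} of the oracle sequence to lie in $X$ (the rationals $p'_k$ themselves need not lie in $(a,b)$, so your shrinking of $\delta$ is needed only to keep $y$ itself in the interval), the correctness of $M$ on the spliced sequence applies and yields $|f(y)-q_n|<2^{-(n-1)}$ via Lemma~\ref{effective-distance-to-limit}\eqref{effective-distance-to-limit:direct}. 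You also correctly identified the one genuinely delicate point, the seam condition $|p'_m-p'_{m+1}|<2^{-m}$, and your fast-approximation device resolves it: $|p_m-p'_{m+1}|\le|p_m-x|+|x-y|+|y-p'_{m+1}|<2^{-(m+2)}+2^{-(m+2)}+2^{-(m+3)}<2^{-m}$, whereas a bare effective approximation would only give $|p_m-x|<2^{-(m-1)}$ and the estimate would fail. What the citation buys the paper is brevity; what your argument buys is a self-contained proof within the paper's own definitions, making visible exactly where determinism of the machine and the use principle enter --- a worthwhile trade if the paper wanted to be self-contained, since nothing beyond Lemma~\ref{effective-distance-to-limit} is used.
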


\subsection{Variants of Solovay reducibility defined via translation function on reals}
According to Proposition~\ref{rationals-Lipschitz}(\ref{rationals-Lipschitz:direct}), on the left-c.e.\ reals, the Solovay reducibility is equivalent to the Solovay reducibility via a Lipschitz continuous $\mathbb{Q}$-translation function. In 2020, Kumabe, Miyabe, Mizusawa, and Suzuki proposed~\cite[Definition~9]{Kumabe-etal-2020} a new type of reducibility by replacing the Lipschitz continuous translation functions on rationals in the latter characterization by the translation functions on reals, without requiring for the reals~$\alpha$ and~$\beta$ to be left-c.e.

In what follows, we give the formal definition of this reducibility denoted by Kumabe et al.\ as ``L2'' under the more intuitive name~``real Solovay reducibility''.
\begin{definition}[Kumabe et al., 2020]\label{define-real-Solovay}
    An \defhigh{$\mathbb{R}$-translation function} from a real~$\beta$ to another real~$\alpha$ is a real function which is computable on the interval~$[0,\beta)$ and maps it to the interval~$[0,\alpha)$ such that
    \begin{equation}\label{eq:R-translation-function}
        \lim\limits_{x\nearrow\beta}f(x) = \alpha.
    \end{equation}
    A real~$\alpha$ is~\defhigh{real Solovay reducible} to a real~$\beta$, written $\alpha\redsolovayreal\beta$, if there exists a Lipschitz continuous $\mathbb{R}$-translation function from~$\beta$ to~$\alpha$.
\end{definition}

Note that, between every two left-c.e.\ reals, there exist both an~$\mathbb{R}$-translation function and a $\mathbb{Q}$-translation function.
\begin{proposition}\label{Existenzsatz}
    If $\alpha$ and $\beta$ are left-c.e.\ reals, then there exists an~$\mathbb{R}$-translation function from~$\beta$ to~$\alpha$, whose restriction on $\mathbb{Q}$ is a $\mathbb{Q}$-translation function from~$\beta$ to~$\alpha$.
\end{proposition}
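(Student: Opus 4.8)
The plan is to reuse the piecewise-linear interpolation from the proof of Proposition~\ref{rationals-Lipschitz}\eqref{rationals-Lipschitz:leftce}, but now \emph{without} appealing to any Solovay condition, so that no global Lipschitz bound on the slopes is available or required. I would fix strictly increasing computable approximations $a_0,a_1,\dots\nearrow\alpha$ and $b_0,b_1,\dots\nearrow\beta$ with $a_0=b_0=0$ (we may assume $\alpha,\beta>0$, the remaining cases being degenerate), and define
\[f(x) = a_n + \frac{a_{n+1}-a_n}{b_{n+1}-b_n}(x - b_n)\quad\text{for }x\in [b_n,b_{n+1}).\]
Since the two sequences are strictly increasing, $f$ is continuous, strictly increasing, maps each $[b_n,b_{n+1})$ onto $[a_n,a_{n+1})$, and hence maps $[0,\beta)=\bigcup_n[b_n,b_{n+1})$ into $[0,\alpha)$ with $\lim_{x\nearrow\beta}f(x)=\alpha$; this establishes the range condition and \eqref{eq:R-translation-function} at once.

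Next I would check that the restriction $g=f|_{\mathbb{Q}}$ is a $\mathbb{Q}$-translation function. Given a rational $q$, enumerate $b_0,b_1,\dots$ and search for the least index with $b_{n+1}>q$; for $q\in\mathrm{LC}(\beta)$ this search halts because $b_n\nearrow\beta>q$, it returns $b_n\le q<b_{n+1}$, and then $g(q)$ is the rational value given by the displayed formula. Thus $g$ is partially computable and defined on all of $\mathrm{LC}(\beta)$ with values in $[a_n,a_{n+1})\subseteq[0,\alpha)$, and the left limit $\lim_{q\nearrow\beta}g(q)=\alpha$ follows exactly as for $f$.

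The main work, and the step I expect to be the real obstacle, is verifying that $f$ is computable on $[0,\beta)$ in the Type-2 sense, since globally the slopes $\frac{a_{n+1}-a_n}{b_{n+1}-b_n}$ may be unbounded (we have dropped the constant $d$ that was available in Proposition~\ref{rationals-Lipschitz}) and reals cannot be tested for membership in a given half-open piece. The idea is to \emph{localize} the argument below a computable breakpoint: given an effective approximation $p_0,p_1,\dots$ of some $x\in[0,\beta)$, I would dovetail a search for a pair $(m,N)$ with $p_m+2^{-(m-1)}<b_N$. Such a pair exists because $x<\beta=\lim_n b_n$, and by \eqref{eq:effective-distance-to-limit:direct} any witnessing pair certifies $x<p_m+2^{-(m-1)}<b_N$, so the search halts and pins $x$ into $[0,b_N)$.

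On the compact interval $[0,b_N]$ the function $f$ agrees with the piecewise-linear function determined by the finite data $b_0<\dots<b_N$ and $a_0<\dots<a_N$, hence is Lipschitz continuous with the computable constant $L_N=\max_{0\le n<N}\frac{a_{n+1}-a_n}{b_{n+1}-b_n}$. To emit the $j$-th output $q_j$ I would take $m$ large enough that $L_N 2^{-(m-1)}<2^{-(j+1)}$, clamp $p_m$ into $[0,b_N]$ to obtain a rational $\tilde{p}_m$ (which does not increase the distance to $x$, as $x\in[0,b_N]$), and set $q_j=f(\tilde{p}_m)$, computed exactly from the finite data by a decidable comparison of rationals. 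Then $|f(x)-q_j|\le L_N|x-\tilde{p}_m|<2^{-(j+1)}$, so $q_0,q_1,\dots$ converges to $f(x)$ and, by Lemma~\ref{effective-distance-to-limit}\eqref{effective-distance-to-limit:inverse}, is an effective approximation of $f(x)$. This exhibits the required Type-2 machine and completes the construction of a single function $f$ that is simultaneously an $\mathbb{R}$-translation function from $\beta$ to $\alpha$ and, upon restriction to $\mathbb{Q}$, a $\mathbb{Q}$-translation function from $\beta$ to $\alpha$.
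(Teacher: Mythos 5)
Your proposal is correct and follows essentially the same route as the paper: the identical piecewise-linear interpolation $f(x)=a_n+\frac{x-b_n}{b_{n+1}-b_n}(a_{n+1}-a_n)$ between fixed left-c.e.\ approximations of $\alpha$ and $\beta$, with the observation that $f$ maps rationals to rationals. The only difference is that you spell out the Type-2 computability of $f$ on $[0,\beta)$ (localizing below a breakpoint $b_N$ and using the local Lipschitz constant $L_N$), a verification the paper leaves to the reader with ``it is easy to see''; your argument for it is sound.
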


\begin{proof}
    Fixing two left-c.e.\ approximations~$a_0,a_1,\dots\nearrow\alpha$ and~$b_0,b_1,\dots\nearrow\beta$, we can construct an~$\mathbb{R}$-translation function~$f$ from~$\beta$ to~$\alpha$ by setting $f(x) = a_0$ for all $x<b_0$ and
    \begin{equation*}
        f(x) = a_n + \frac{x-b_n}{b_{n+1} - b_n}(a_{n+1} - a_n)\quad\text{if there exists such }n\text{ that }b_n \leq x < b_{n+1}.
    \end{equation*}
    It is easy to see that, if $x$ is rational, then $f(x)$ is rational too, so $f|_{\mathbb{Q}}$ is a $\mathbb{Q}$-translation function from $\beta$ to $\alpha$.
\end{proof}

In contrast to Solovay reducibility (see Corollary~\ref{monotone-is-strictly-stronger}), the additional requirement for the $\mathbb{R}$-translation function in the latter definition to be nondecreasing (Kumabe et al.\ denoted~\cite[Definition~9]{Kumabe-etal-2020} the resulting reducibility~``L1'') does not induce a strictly stronger reducibility on~$\mathbb{R}$ than~$\redsolovayreal$, as we will see in the next proposition, which can be shown by essentially the same proof as~\cite[Corollary~6.2.5]{Weihrauch-2000} (computability of the maximum operator).

\begin{proposition}\label{real-monotonization}
    If $f$ is a $\mathbb{R}$-translation function from a real~$\beta$ to another real~$\alpha$, then
    \[\Tilde{f}(x) = \max\{f(y):y\in [0,x]\}\]
    is a monotone nondecreasing $\mathbb{R}$-translation function from~$\beta$ to~$\alpha$.
    $\Tilde{f}$ can be computed by a Machine $M$ of Type 2 that satisfies the following property:
    \begin{enumerate}[(A)]
        \item\label{property-P}
        For every real $x<\beta$, every rational $q>\beta$, every effective approximation $p_0,p_1,\dots$ of $x$, and every $n\in \mathbb{N}$, if the computation of $M^{(q,q,\dots)}$ with tolerance $2^{-n}$ halts, and its result is some rational $r$, then $r$ fulfills the inequality $r > \Tilde{f}^{(n)}(x) - 2^{-(n-1)}$, where $\Tilde{f}^{(n)}(x)$ is the result of the computation of $M^{(p_0,p_1,\dots)}$ with the tolerance $2^{-n}$ (which obviously fulfills $|f(x)-\Tilde{f}^{(n)}(x)|<2^{-n}$; so, in particular, $r>f(x) - 2^{-(n-2)}$).
    \end{enumerate}
    Moreover, if~$\alpha\redsolovayreal\beta$ via~$f$, then~$\alpha\redsolovayreal\beta$ via~$\Tilde{f}$ with the same Lipschitz constant.
\end{proposition}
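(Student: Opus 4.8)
The plan is to verify the three claims in sequence: that $\tilde f$ is a monotone nondecreasing $\mathbb{R}$-translation function, that the computing machine $M$ satisfies property~(\ref{property-P}), and that the Lipschitz constant is preserved.

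First I would establish the qualitative properties of $\tilde f(x) = \max\{f(y):y\in[0,x]\}$. Nondecreasing monotonicity is immediate, since enlarging $x$ only enlarges the set $[0,x]$ over which the maximum is taken. That $\tilde f$ maps $[0,\beta)$ into $[0,\alpha)$ requires noting that the supremum of $f$ over $[0,x]$ is attained (continuity of $f$, guaranteed by computability, on the compact set $[0,x]\subset[0,\beta)$) and that it stays below $\alpha$; here I would use that $f$ is Lipschitz, hence uniformly continuous, together with the limit condition~\eqref{eq:R-translation-function}, to rule out the maximum reaching $\alpha$ prematurely. For the limit~\eqref{eq:R-translation-function} for $\tilde f$, I would argue that since $f(x)\le\tilde f(x)$ and $\lim_{x\nearrow\beta}f(x)=\alpha$, we have $\liminf_{x\nearrow\beta}\tilde f(x)\ge\alpha$; the reverse inequality $\tilde f(x)<\alpha$ gives the matching upper bound, so $\lim_{x\nearrow\beta}\tilde f(x)=\alpha$.

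Next I would address computability of $\tilde f$ and property~(\ref{property-P}), which is the technical heart and the step I expect to be the main obstacle. The construction mirrors the computability of the maximum operator from~\cite[Corollary~6.2.5]{Weihrauch-2000}: from an effective approximation $p_0,p_1,\dots$ of $x$, the machine $M$ samples $f$ at a finite effective mesh of points covering $[0,p_n]$ to within the tolerance dictated by the Lipschitz constant, takes the maximum of these sampled values, and outputs it as the $n$-th term of an effective approximation of $\tilde f(x)$. The delicate points are that the error incurred by working with a finite mesh and with approximate inputs is controlled by the Lipschitz constant, so that the output is genuinely an effective approximation of $\tilde f(x)$; and that the estimate in~(\ref{property-P}) holds. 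For the latter, the key observation is that feeding the constant oracle $(q,q,\dots)$ with $q>\beta$ makes $M$ compute over an interval $[0,q']$ containing $[0,x]$ for the relevant $x<\beta$, so the sampled maximum over the larger interval dominates the sampled maximum used to compute $\tilde f^{(n)}(x)$; the inequality $r>\tilde f^{(n)}(x)-2^{-(n-1)}$ then follows by accounting for the at most $2^{-(n-1)}$ discrepancy between an effective-approximation term and its limit via Lemma~\ref{effective-distance-to-limit}(\ref{effective-distance-to-limit:direct}). The parenthetical consequence $r>f(x)-2^{-(n-2)}$ is then a routine combination of this bound with $|f(x)-\tilde f^{(n)}(x)|<2^{-n}$.

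Finally, for the Lipschitz claim, suppose $\alpha\redsolovayreal\beta$ via $f$ with Lipschitz constant $d$, so $|f(y)-f(y')|<d|y-y'|$ for all $y,y'$. I would show $\tilde f$ has the same constant. Given $x<x'$, pick $y^*\in[0,x']$ attaining $\tilde f(x')=f(y^*)$. If $y^*\le x$ then $\tilde f(x')=f(y^*)\le\tilde f(x)$, so $|\tilde f(x')-\tilde f(x)|=\tilde f(x')-\tilde f(x)\le 0<d|x'-x|$ trivially; otherwise $x<y^*\le x'$, and using that $\tilde f(x)\ge f(x)$ together with the Lipschitz bound $f(y^*)-f(x)<d(y^*-x)\le d(x'-x)$ gives $\tilde f(x')-\tilde f(x)\le f(y^*)-f(x)<d(x'-x)$. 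Since $\tilde f$ is nondecreasing the absolute value is handled automatically, so $\tilde f$ is Lipschitz with constant $d$, and being a $\mathbb{R}$-translation function from $\beta$ to $\alpha$ it witnesses $\alpha\redsolovayreal\beta$ with the same constant.
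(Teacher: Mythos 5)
Your route coincides with the paper's: the paper gives no detailed argument for this proposition, stating only that it ``can be shown by essentially the same proof as~\cite[Corollary~6.2.5]{Weihrauch-2000}'' (computability of the maximum operator), and your sketch mirrors exactly that construction, adding the routine verifications one would expect. Your case distinction for the Lipschitz constant (whether the point $y^*$ attaining $\Tilde{f}(x')$ lies in $[0,x]$ or in $(x,x']$) is correct, as is the squeeze argument $f(x)\leq\Tilde{f}(x)<\alpha$ for the limit condition~\eqref{eq:R-translation-function}, and your key observation for property~(\ref{property-P}) — that a constant oracle $(q,q,\dots)$ with $q>\beta$ forces the machine's sampling to cover an interval reaching beyond $\beta$, so that a \emph{halting} run produces a value dominating, up to the tolerance slack, the sampled maximum over $[0,x]$ for every $x<\beta$ — is precisely the point of the construction and is what Proposition~\ref{leftce-standalone-equivalence-real} later exploits.

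There is, however, one repairable overreach: in two places you use a Lipschitz constant for $f$ that the statement does not grant you. The first claim of the proposition (and its application in Proposition~\ref{leftce-standalone-equivalence-real}) concerns an \emph{arbitrary} $\mathbb{R}$-translation function, which is merely computable, hence continuous, on $[0,\beta)$; Lipschitz continuity enters only in the final ``Moreover'' clause. For the claim $\Tilde{f}(x)<\alpha$ this costs nothing: no uniform continuity is needed, since the maximum over the compact set $[0,x]\subset[0,\beta)$ is attained at some $y^*$ by plain continuity, and $f(y^*)<\alpha$ because $f$ maps $[0,\beta)$ into $[0,\alpha)$ by Definition~\ref{define-real-Solovay}. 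More seriously, your machine's mesh fineness ``dictated by the Lipschitz constant'' is unavailable in the general case; there the modulus of continuity of $f$ on $[0,p]$ for rational $p<\beta$ must be extracted from the Type-2 machine computing $f$ via the compactness argument underlying Weihrauch's proof, with dovetailing, because evaluating $f$ at rational mesh points $\geq\beta$ may diverge. That divergence is harmless for genuine oracles converging to some $x<\beta$ (the certified interval eventually drops below $\beta$) and in fact works in favor of property~(\ref{property-P}): a halting run on $(q,q,\dots)$ certifies that the evaluated cover reached past $\beta$. With this substitution your sketch goes through and matches the paper's intended proof.
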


\begin{corollary}\label{eq:real-reducibility-is-monotonizable}
    If~$\alpha\redsolovayreal\beta$ for two reals~$\alpha$ and~$\beta$, then $\alpha\redsolovayreal\beta$ via a nondecreasing $\mathbb{R}$-translation function.
\end{corollary}

The following property of monotone $\mathbb{R}$-translation functions, which is similar to Lemma~\ref{monotone-function-left-and-right-to-limit}\eqref{monotone-function-left-to-limit} for $\mathbb{Q}$-translation functions, can be easily proved.
\begin{lemma}\label{real-monotone-function-left-to-limit}
    If $f$ is a monotone $\mathbb{R}$-translation function from a real $\beta$ to a real $\alpha$ and $x_0,x_1,\dots$ is a sequence of reals in $[0,\beta)$ such that $\lim\limits_{n\to\infty}f(x_n) = \alpha$, then $\lim\limits_{n\to\infty}x_n = \beta$.
\end{lemma}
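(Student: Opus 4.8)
The plan is to argue by contraposition, using only the monotonicity of $f$ together with the fact that $f$ maps $[0,\beta)$ strictly below $\alpha$; neither continuity nor computability will be needed. Suppose that $x_0,x_1,\dots$ does not converge to~$\beta$. Since every $x_n$ lies in $[0,\beta)$, this means there is some $\varepsilon>0$ and a subsequence $x_{n_0},x_{n_1},\dots$ with $x_{n_k}\le\beta-\varepsilon$ for all~$k$; after possibly shrinking $\varepsilon$ we may assume $\beta-\varepsilon\ge 0$, so that the point $\beta-\varepsilon$ lies in the domain $[0,\beta)$ of~$f$.

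First I would use that $f$ is nondecreasing: from $x_{n_k}\le\beta-\varepsilon$ we obtain $f(x_{n_k})\le f(\beta-\varepsilon)$ for every~$k$. Next I would invoke that $f$ maps $[0,\beta)$ into $[0,\alpha)$, so the value $f(\beta-\varepsilon)$ is a fixed constant strictly below~$\alpha$. Hence $f(x_{n_k})\le f(\beta-\varepsilon)<\alpha$ for all~$k$, which forces $\limsup_{k}f(x_{n_k})\le f(\beta-\varepsilon)<\alpha$.

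This contradicts the hypothesis $\lim_{n\to\infty}f(x_n)=\alpha$: every subsequence of a convergent sequence has the same limit, so $\lim_{k}f(x_{n_k})=\alpha$, which is incompatible with the bound just established. Therefore the assumption was false and $\lim_{n\to\infty}x_n=\beta$, as claimed.

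The argument is short and I do not anticipate a genuine obstacle. The only points requiring a little care are keeping $\beta-\varepsilon$ inside the domain (handled by shrinking $\varepsilon$) and using the \emph{strict} inequality $f(\beta-\varepsilon)<\alpha$, which comes directly from the codomain $[0,\alpha)$ in Definition~\ref{define-real-Solovay} rather than from any limit or effectivity property of~$f$. This mirrors exactly the proof of the rational-valued analogue, Lemma~\ref{monotone-function-left-and-right-to-limit}\eqref{monotone-function-left-to-limit}, the only structural difference being that here the approximants $x_n$ range over reals rather than rationals, which plays no part in the monotonicity estimate.
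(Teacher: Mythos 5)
Your proof is correct. The paper gives no argument for this lemma at all (it is stated with the remark that it ``can be easily proved''), and your contrapositive argument---bounding a subsequence by $\beta-\varepsilon$, then using monotonicity together with the strict bound $f(\beta-\varepsilon)<\alpha$ coming from the codomain $[0,\alpha)$ in Definition~\ref{define-real-Solovay}---is exactly the intended elementary reasoning, mirroring the rational case of Lemma~\ref{monotone-function-left-and-right-to-limit}\eqref{monotone-function-left-to-limit}.
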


Similarly as for $\redsolovaymonotone$, we can show the $\redsolovayreal$-closure upwards and downwards of left-c.e.\ reals.

\begin{proposition}\label{leftce-standalone-equivalence-real}
    Let~$\alpha, \beta\in\mathbb{R}$ where~$\alpha$ is left-c.e. Then the following statements are equivalent:
    \begin{enumerate}[(i)]
        \item $\beta$ is left-c.e.;
        \item there exists an $\mathbb{R}$-translation function from~$\alpha$ to~$\beta$;
        \item there exists an $\mathbb{R}$-translation function from~$\beta$ to~$\alpha$.
    \end{enumerate}
\end{proposition}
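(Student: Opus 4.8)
The plan is to follow the pattern of Proposition~\ref{leftce-standalone-equivalence-monotone}, but with exact evaluations of $\mathbb{Q}$-translation functions replaced by approximate evaluations of $\mathbb{R}$-translation functions through a Type-2 machine, and with the monotonization of Proposition~\ref{real-monotonization} used to reduce to the nondecreasing case. The implications $(i)\Rightarrow(ii)$ and $(i)\Rightarrow(iii)$ are immediate from Proposition~\ref{Existenzsatz}: if both $\alpha$ and $\beta$ are left-c.e., then that proposition, applied to the pair in either order, furnishes an $\mathbb{R}$-translation function from $\alpha$ to $\beta$ and one from $\beta$ to $\alpha$. So it remains to establish $(ii)\Rightarrow(i)$ and $(iii)\Rightarrow(i)$, and in each case Proposition~\ref{real-monotonization} lets me assume the witnessing $\mathbb{R}$-translation function is nondecreasing. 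Throughout I use that the constant sequence $(x,x,\dots)$ is an effective approximation of a rational $x$ (Lemma~\ref{effective-distance-to-limit}\eqref{effective-distance-to-limit:inverse}), so that a machine computing the function returns effective approximations of its value at $x$ whenever $x$ lies in the domain.

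For $(ii)\Rightarrow(i)$, fix a nondecreasing $\mathbb{R}$-translation function $f$ from $\alpha$ to $\beta$ and a left-c.e.\ approximation $a_0, a_1, \dots \nearrow \alpha$. Each $a_n$ lies in $[0,\alpha)$, so $f(a_n)$ is computable; the values $f(a_n)$ are nondecreasing (by monotonicity), converge to $\beta$ (by the left-limit condition of Definition~\ref{define-real-Solovay}, with the roles of $\alpha$ and $\beta$ interchanged), and satisfy $f(a_n)<\beta$ for every $n$. Approximating each $f(a_n)$ from below to within $2^{-k}$ produces rationals that are all $<\beta$; dovetailing over all pairs $(n,k)$ and taking running maxima yields a nondecreasing computable sequence of rationals below $\beta$ that converges to $\beta$. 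Hence $\beta$ is left-c.e. Note that here no appeal to Property~\eqref{property-P} is needed, because all inputs $a_n$ lie in the domain $[0,\alpha)$, where $M$ computes $f$ correctly.

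The substantial direction is $(iii)\Rightarrow(i)$. If $\beta$ is rational it is trivially left-c.e., so assume $\beta$ is irrational. Let $f$ be a nondecreasing $\mathbb{R}$-translation function from $\beta$ to $\alpha$ computed, by Proposition~\ref{real-monotonization}, by a Type-2 machine $M$ satisfying Property~\eqref{property-P}, and fix a left-c.e.\ approximation $a_0, a_1, \dots \nearrow \alpha$. I would build a strictly increasing computable sequence of rationals $x_0 < x_1 < \cdots$ in $[0,\beta)$ with $f(x_n) > a_n$ as follows: at step $n$, dovetail over all rationals $x > x_{n-1}$, all tolerances $k$, and all indices $j$, running $M^{(x,x,\dots)}$, and accept the first $x$ for which the tolerance-$2^{-k}$ computation halts with a result $r$ such that
\[ r - 2^{-(k-1)} > a_n, \qquad r + 2^{-(k-1)} < a_j, \qquad a_j + 2^{-(k-1)} \le a_{j'} \text{ for some } j' > j. \]
The first inequality confirms $f(x) > a_n$ and the second confirms $f(x) < a_j < \alpha$; the third, together with $a_{j'} < \alpha$, gives $a_j < \alpha - 2^{-(k-1)}$. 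These force $x < \beta$: were $x > \beta$, Property~\eqref{property-P} (applied with arguments approaching $\beta$ from below) would force $r \ge \alpha - 2^{-(k-2)}$, whence $r + 2^{-(k-1)} \ge \alpha - 2^{-(k-1)} > a_j$, contradicting the second inequality. Conversely, for every rational $x \in (x_{n-1}, \beta)$ sufficiently close to $\beta$ one has $a_n < f(x) < \alpha$ by monotonicity and~\eqref{eq:R-translation-function}, so choosing $j$ with $f(x) < a_j < \alpha$, then $j' > j$ with $a_{j'} > a_j$, and finally a large enough tolerance $k$, makes all three inequalities hold; thus each step terminates.

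The resulting sequence $x_0 < x_1 < \cdots$ is computable, strictly increasing, and contained in $[0,\beta)$, and satisfies $f(x_n) > a_n \to \alpha$, so $f(x_n) \to \alpha$; Lemma~\ref{real-monotone-function-left-to-limit} then yields $x_n \to \beta$, so $x_0, x_1, \dots$ is a left-c.e.\ approximation of $\beta$ and $\beta$ is left-c.e. The main obstacle will be exactly this certification $x < \beta$: since $f$ is only approximable and $M$ may behave arbitrarily on inputs exceeding $\beta$, one cannot read off $x < \beta$ from a halting computation directly, and it is Property~\eqref{property-P} of the monotonized machine---used through the lower bound $r \ge \alpha - 2^{-(k-2)}$ for out-of-domain inputs---that makes the acceptance condition sound. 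The only delicate point is the accompanying tolerance bookkeeping, which must secure a strict separation between admissible results (bounded away from $\alpha$ by the margin $\alpha - a_j$, certified through the later term $a_{j'}$) and the results that Property~\eqref{property-P} guarantees once the input passes $\beta$.
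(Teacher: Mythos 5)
Your proposal is correct and takes essentially the same route as the paper's own proof: the implications $(i)\Rightarrow(ii),(iii)$ via Proposition~\ref{Existenzsatz}, monotonization via Proposition~\ref{real-monotonization}, the direction $(ii)\Rightarrow(i)$ by evaluating the machine along a left-c.e.\ approximation of~$\alpha$ (where no appeal to~\eqref{property-P} is needed), and the direction $(iii)\Rightarrow(i)$ by a dovetailed search over rationals, tolerances, and indices whose acceptance condition is made sound by Property~\eqref{property-P} and whose limit is identified via Lemma~\ref{real-monotone-function-left-to-limit}, after reducing to irrational~$\beta$. Your bookkeeping differs only cosmetically from the paper's (lower approximations with running maxima instead of paired-index certification in $(ii)\Rightarrow(i)$; the extra witness $a_{j'}$ in place of the paper's doubled margin $2^{-(m-2)}$ in \eqref{eq:tolerance-if-fine}), and both variants check out.
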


\begin{proof}
Let $a_0,a_1,\dots$ where $a_0 = 0$ be a left-c.e.\ approximation of~$\alpha$.
\begin{itemize}
    \item $(i)\implies(ii)$ and $(i)\implies(iii)$.
    Directly by Proposition~\ref{Existenzsatz}.

    \item $(ii)\implies(i)$.
    If there exists an $\mathbb{R}$-translation function from~$\alpha$ to~$\beta$, then, by Proposition~\ref{real-monotonization}, we can fix a nondecreasing $\mathbb{R}$-translation function $f$ from~$\alpha$ to~$\beta$.
    Then we compute a left-c.e.\ approximation of $\beta$ step-wise: starting with~$i_{-1} = -1$ and $m_{-1} = 0$, in every step~$n\geq 0$, we approximate the values of $f(a_i)$ for all~$i>i_{n-1}$ effectively until we find a natural~$m>m_{n-1}$ and two indices~$i,j$ where $i_{n-1} <i <j$ such that, for the approximated with tolerance $2^{-m}$ values~$\Tilde{f}(a_i)$ and~$\Tilde{f}(a_j)$ of~$f(a_i)$ and $f(a_j)$, respectively, the inequality
    \[\Tilde{f}(a_i)+2^{-m} < \Tilde{f}(a_j)-2^{-m}\]
    holds (so, by $\Tilde{f}(a_j)<f(a_j) + 2^{-m}< \alpha + 2^{-m}$, we can guarantee that $\Tilde{f}(a_i)<\alpha$), and set~$i_n = i$, $m_n = m$, and $b_{n} = \Tilde{f}(a_{i_n})$. Then the sequence~$b_0,b_1,\dots$ is a left-c.e.\ approximation of~$\beta$.
    
    \item $(iii)\implies(i)$. The case $\beta\in\mathbb{Q}$ is trivial; so, in what follows, assume $\beta\notin\mathbb{Q}$.
    
    If there exists an $\mathbb{R}$-translation function from~$\beta$ to~$\alpha$, then, by Proposition~\ref{real-monotonization}, we can fix a nondecreasing $\mathbb{R}$-translation function $f$ from~$\beta$ to~$\alpha$ and a Turing machine $M$ of Type 2 that computes $f$ and fulfills~\eqref{property-P}.
    Then we compute a left-c.e.\ approximation of $\beta$ step-wise: starting from some fixed $b_{-1} := 0 < \beta$ and $j_{-1} = -1$,
    in every step~$n> 0$, we search for a rational $b > b_{n-1}$, a natural $m$, and an index $j> j_{n-1}$ such that the computation of~$M^{(b,b,\dots)}$ with tolerance $2^{-m}$ halts, and its result $r$ (in case $b<\beta$, $r$ we have~$|f(b)-r| < 2^{-m}$ by definition) fulfills
    \begin{equation}\label{eq:tolerance-if-fine}
        a_{j_{n-1}}+2^{-m} < r - 2^{-m} \makebox[4em]{and} r + 2^{-(m-2)} < a_{j}-2^{-(m-2)},
    \end{equation}
    and set $b_n = b$, $m_n = m$, $r_n = r$, and $j_n = j$.
    
    Step $n$ terminates and yields a rational $b_n<\beta$ due to the following argumentation: \begin{itemize}
        \item assuming by induction hypothesis that $b_{n-1}<\beta$, the first inequality in~\eqref{eq:tolerance-if-fine} is satisfied for all rationals $r=f^{(m_1)}(b)$ where $f^{(m_1)}(b)$ is the result of the computation of $M^{(b,b,\dots)}$ with the tolerance $2^{-m_1}$ (which fulfills $|f(b) - \Tilde{f}^{(m_1)}(b)| < 2^{-m_1}$ since $b<\beta$) for all rationals $b$ in some left neighborhood of $\beta$ (maybe intersected with the interval $[b_{n-1},\beta)$, which is nonempty by induction hypothesis) and all sufficiently large $m_1$. For every such $r$, all but finitely many indices $j$ fulfill $f(b) < a_j$, hence there exists a natural $m_2$ such that $r + 2^{-(m_2-2)} < a_j - 2^{-(m_2-2)}$. We conclude the termination proof by choosing $m> \max\{m_1,m_2\}$.

        \item If, for some $m$ and $j$, there exists a rational $b>\beta$ such that the computation of~$M^{(b,b,\dots)}$ with tolerance $2^{-m}$ halts, and its result $r$ fulfills~\eqref{eq:tolerance-if-fine}, then we obtain from the right part of~\eqref{eq:tolerance-if-fine} that $r + 2^{-(m-2)} < \alpha - 2^{-(m-2)}$,
        hence $r < \alpha - 2^{-(m-3)}$.
        By $\lim\limits_{x\nearrow\beta}f(x) = \alpha$, there exists a real $x<\beta$ such that $r<f(x) - 2^{-(m-2)}$, but this contradicts to the property~\eqref{property-P} applied on $x$ and $b$.
        Finally, from $b\in\mathbb{Q}$ and $\beta\notin\mathbb{Q}$, we obtain that $b\neq \beta$; thus, the only possible case is $b<\beta$.
    \end{itemize}
    Therefore, we have constructed an infinite sequence $b_0<b_1<\dots<\beta$, hence, for every $n$, the definition of $r_n$ and the construction of $M$ imply that
    \begin{equation}\label{eq:r-left-from-beta}
        |f(b_n)-r_n| < 2^{-m_n}.
    \end{equation}
    
    From~\eqref{eq:tolerance-if-fine} and~\eqref{eq:r-left-from-beta}, we easily obtain that
    \begin{equation}\label{eq:b_n-increasing}
        a_{j_{n-1}} < f(b_n) < a_{j_n}.
    \end{equation}
    Thus, the sequence $b_0,b_1,\dots$ lies in~$[0,\beta)$ by~\eqref{eq:r-left-from-beta} and fulfills~$\lim\limits_{n\to\infty}f(b_n) = \alpha$ by~\eqref{eq:b_n-increasing}, hence, by Lemma~\ref{real-monotone-function-left-to-limit}, it converges to $\beta$.
    Since the sequence $b_0,b_1,\dots$ is strictly increasing by~\eqref{eq:r-left-from-beta}, it is a left-c.e.\ approximation of~$\beta$.
\end{itemize}
\end{proof}

\begin{corollary}\label{leftce-standalone-real}
    The set of left-c.e.\ reals is $\redsolovayreal$-closed downwards and upwards.
\end{corollary}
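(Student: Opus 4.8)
The plan is to read off both closure directions directly from Proposition~\ref{leftce-standalone-equivalence-real}, exactly as Corollary~\ref{leftce-standalone-monotone} was obtained from Proposition~\ref{leftce-standalone-equivalence-monotone}. The first observation to make is that the only feature of real Solovay reducibility the argument uses is the mere \emph{existence} of a translation function in one direction: by Definition~\ref{define-real-Solovay}, $\alpha\redsolovayreal\beta$ supplies a Lipschitz continuous $\mathbb{R}$-translation function from $\beta$ to $\alpha$, which is in particular an $\mathbb{R}$-translation function from $\beta$ to $\alpha$. Hence the Lipschitz condition and the constant play no role here, and I may appeal solely to the purely existential conditions (ii) and (iii) of the proposition.

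For the closure \emph{upwards}, I would assume $\alpha$ is left-c.e.\ and $\alpha\redsolovayreal\beta$. The reduction then yields an $\mathbb{R}$-translation function from $\beta$ to $\alpha$, which is precisely statement (iii) of Proposition~\ref{leftce-standalone-equivalence-real} for the (left-c.e.) real $\alpha$. The implication (iii)$\implies$(i) immediately gives that $\beta$ is left-c.e., as required.

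For the closure \emph{downwards}, I would assume $\beta$ is left-c.e.\ and $\alpha\redsolovayreal\beta$. Again the reduction yields an $\mathbb{R}$-translation function from $\beta$ to $\alpha$, but now the real known to be left-c.e.\ is $\beta$, so I would apply Proposition~\ref{leftce-standalone-equivalence-real} with the roles of the two reals interchanged, i.e.\ with $\beta$ playing the part of the distinguished left-c.e.\ real named $\alpha$ in the proposition and $\alpha$ playing the part of $\beta$. Under this renaming, a translation function from $\beta$ to $\alpha$ is a translation function \emph{from} the distinguished left-c.e.\ real, which is statement (ii); the implication (ii)$\implies$(i) then gives that $\alpha$ is left-c.e.

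Since the entire content sits inside Proposition~\ref{leftce-standalone-equivalence-real}, there is no genuine obstacle to overcome; the only point demanding care is the bookkeeping of directions, namely that a witness for $\alpha\redsolovayreal\beta$ always runs from $\beta$ to $\alpha$, and that this single function serves as condition (iii) when $\alpha$ is the distinguished left-c.e.\ real and as condition (ii) after interchanging the two reals when $\beta$ is the distinguished one.
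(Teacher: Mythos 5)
Your proof is correct and is essentially the paper's own argument: the corollary is stated without a separate proof precisely because it follows immediately from Proposition~\ref{leftce-standalone-equivalence-real} via the implications (iii)$\implies$(i) for upward closure and, after interchanging the roles of the two reals, (ii)$\implies$(i) for downward closure. Your bookkeeping of the direction of the translation function (always from $\beta$ to $\alpha$ for a witness of $\alpha\redsolovayreal\beta$) and your observation that the Lipschitz condition is not needed are both accurate.
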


In 2024, Kumabe, Miyabe, and Suzuki introduced~\cite[Chapter~5]{Kumabe-etal-2023} a slightly modified version of~$\redsolovayreal$ by omitting the requirement for the $\mathbb{R}$-translation function $f$ in Definition~\ref{define-real-Solovay} to fulfill~$f(x)<\alpha$ for all~$x<\beta$. Kumabe et al.\ also examined the totalized variant~$\redcllocal$ of the latter reducibility.

\begin{definition}[Kumabe et al., 2024]
    A \defhigh{weakly $\mathbb{R}$-translation function} from a real~$\alpha$ to another real~$\beta$ is a real function which is computable on the interval~$[0,\beta)$ and satisfies~\eqref{eq:R-translation-function}.
    
    A real~$\alpha$ is \defhigh{computably Lipschitz reducible} to a real~$\beta$ \defhigh{on a c.e.\ open interval}, written $\alpha\redclopen\beta$, if there exists a Lipschitz continuous weakly $\mathbb{R}$-translation function~$f$ on reals from~$\beta$ to~$\alpha$.
    
    $\alpha$ is \defhigh{computably Lipschitz reducible} to a real~$\beta$ \defhigh{locally}, written $\alpha\redcllocal\beta$, if there exists a real~$b>\beta$ and a Lipschitz continuous function~$f$ on reals, which is computable on~$[0,b]$ and fulfills~$f(\beta) = \alpha$.
\end{definition}

By~\cite[Theorem 1]{Kumabe-etal-2020} and by~\cite[Observation 5.3]{Kumabe-etal-2023}, respectively, the reducibilities~$\redsolovayreal$ and~$\redclopen$ are equivalent to the Solovay reducibility~$\redsolovay$ on the set of left-c.e.\ reals.

In Section~\ref{section:main-result}, we prove that $\redclopen$ is implied by $\redsolovay$ and~$\redcllocal$ is implied by~$\redsolovaytotal$ on~$\mathbb{R}$ and that $\redsolovayreal$ is implied by $\redsolovay$ on all but computable reals.


\section{Reducibilities~$\redclopen$ and~$\redcllocal$ as measures of relative Martin-Löf and Schnorr randomness, respectively}
On the one hand, it is easy to see that every computable real~$\alpha$ is $\redcllocal$-reducible and thus also~$\redclopen$-reducible to every further real $\beta$ via the weakly $\mathbb{R}$-translation function $f(x) = \alpha$ which is computable (as a real function) and totally defined on the unit interval.

On the other hand, if a real~$\alpha$ is $\redclopen$-reducible to a computable real~$\beta$, then we can easily compute~$\alpha$ as well; the formal proof is left to the reader as an exercise. The latter two observations imply that the least ~$\redclopen$- and~$\redcllocal$-degrees on~$\mathbb{R}$ both contain exactly all computable reals.

\begin{proposition}\label{computable-form-least-degree}
    The computable reals form the least degree on~$\mathbb{R}$ relative to both~$\redclopen$ and~$\redcllocal$.
\end{proposition}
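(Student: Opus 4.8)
The plan is to establish two complementary facts and then assemble them into the degree-theoretic statement: a \emph{downward} direction, that every computable real is both $\redclopen$- and $\redcllocal$-reducible to an arbitrary real, and a \emph{rigidity} direction, that a real reducible to a computable real is itself computable. The first places the computable reals below everything; the second shows nothing noncomputable joins them.

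For the downward direction I would use the constant function $f\equiv\alpha$. Since $\alpha$ is computable, a machine of Type 2 can ignore its oracle and output a fixed computable effective approximation of $\alpha$, so $f$ is computable as a real function and totally defined on $[0,1]$; it is trivially Lipschitz continuous, as $|f(p)-f(q)|$ is always $0$; and it satisfies $f(\beta)=\alpha$ and $\lim_{x\nearrow\beta}f(x)=\alpha$. Choosing any $b\in(\beta,1]$ this witnesses $\alpha\redcllocal\beta$, and restricting $f$ to $[0,\beta)$ witnesses $\alpha\redclopen\beta$. I would record here the general implication $\alpha\redcllocal\beta\Rightarrow\alpha\redclopen\beta$ (restrict the witness to $[0,\beta)$ and use continuity to recover the limit), which lets me transfer the rigidity argument for $\redclopen$ to $\redcllocal$.

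For the rigidity direction, suppose $\beta$ is computable with $\beta>0$ (the degenerate case $\beta=0$, where $[0,\beta)$ is empty, is set aside) and $\alpha\redclopen\beta$ via a Lipschitz continuous weakly $\mathbb{R}$-translation function $f$ from $\beta$ to $\alpha$ with Lipschitz constant $d$, computed by a machine $M$. From a computable effective approximation of $\beta$ I would extract a computable sequence of rationals $x_n\in[0,\beta)$ with $\beta-x_n<2^{-n}$. Passing to the limit $y\nearrow\beta$ in the Lipschitz inequality $|f(y)-f(x_n)|<d(y-x_n)$ gives $|\alpha-f(x_n)|\le d(\beta-x_n)<d\,2^{-n}$; computing each $f(x_n)$ with $M$ to within $2^{-n}$ then yields rationals $r_n$ with $|\alpha-r_n|<(d+1)2^{-n}$, an effective approximation of $\alpha$, so $\alpha$ is computable. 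The $\redcllocal$ case is more direct: from $f(\beta)=\alpha$ with $f$ computable on a neighbourhood of the computable point $\beta$, feeding a computable approximation of $\beta$ into $M$ already produces one of $\alpha$ (and it also follows via the implication recorded above).

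Finally I would assemble the statement. By the downward direction every computable real reduces to every real under both reducibilities; in particular any two computable reals are mutually reducible, so they all lie in a single degree, and this degree lies below every other degree, i.e.\ it is least. By the rigidity direction, any real reducible to a computable real (e.g.\ $1/2$) is computable, so no noncomputable real shares this degree; hence the degree consists of exactly the computable reals, for both $\redclopen$ and $\redcllocal$. The main obstacle is the rigidity direction for $\redclopen$: unlike $\redcllocal$, the point $\beta$ is \emph{not} in the domain of $f$, so the approximation of $\alpha$ must be assembled from interior values $f(x_n)$, and it is precisely the Lipschitz constant together with the effective control $\beta-x_n<2^{-n}$ afforded by the computability of $\beta$ that converts these into an error-controlled, hence effective, approximation of $\alpha$.
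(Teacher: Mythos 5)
Your proof is correct and takes essentially the same approach as the paper, which likewise handles the downward direction via the constant function $f\equiv\alpha$ (totally defined and trivially Lipschitz, witnessing $\alpha\redcllocal\beta$ and hence $\alpha\redclopen\beta$) and explicitly leaves the rigidity direction as an exercise; your completion of that exercise---passing to the limit $y\nearrow\beta$ in the Lipschitz inequality to get $|\alpha-f(x_n)|\le d(\beta-x_n)$ and using the computability of $\beta$ to make $\beta-x_n<2^{-n}$ effective---is the intended argument. (One cosmetic point: your rationals $r_n$ with $|\alpha-r_n|<(d+1)2^{-n}$ form an effective approximation in the paper's strict sense only after a constant shift of indices, which is routine.)
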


\medskip

The $\redsolovay$-closure upwards of Martin-Löf random reals has been proved by Solovay himself~\cite{Solovay-1975};
the $\redsolovaytotal$-closure upwards of Schnorr random reals has been demonstrated by Merkle and Titov~\cite[Corollary 2.10]{Merkle-Titov-2022}.

In what follows, we prove the same closures for the reducibilities~$\redclopen$ and~$\redclopen$, respectively.
\begin{proposition}
    \begin{enumerate}[(a)]
        \item The set of Martin-Löf random reals is $\redclopen$-closed upwards.
        \item The set of Schnorr random reals is $\redcllocal$-closed upwards.
    \end{enumerate}
\end{proposition}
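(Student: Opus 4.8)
The plan is to prove both parts by contraposition, transferring a test that covers the ``harder'' real~$\beta$ forward through the witnessing translation function to obtain a test covering~$\alpha$, and exploiting the Lipschitz constant to control the blow-up in measure. Throughout, let~$d$ denote the Lipschitz constant of the witness~$f$.

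For part (a), suppose~$\alpha$ is Martin-Löf random and $\alpha\redclopen\beta$ via a Lipschitz continuous weakly $\mathbb{R}$-translation function~$f$ from~$\beta$ to~$\alpha$; assume for contradiction that~$\beta$ is \emph{not} Martin-Löf random. The first step is to record the localized Lipschitz estimate: for every $x\in[0,\beta)$ we have $|f(x)-\alpha|\le d(\beta-x)$, obtained from $|f(x)-f(y)|\le d|x-y|$ by letting $y\nearrow\beta$ and using $\lim_{y\nearrow\beta}f(y)=\alpha$. Fixing a Martin-Löf test $(U_n)_n$ with $\mu(U_n)\le 2^{-n}$ and $\beta\in\bigcap_n U_n$, the second step is to push each~$U_n$ forward through~$f$. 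Writing~$U_n$ as an effective union of rational intervals, I would cover the $f$-image of each component $(p,q)$ lying below~$\beta$ by an interval of length at most $d(q-p)$; for the component $(p_*,q_*)$ that contains~$\beta$, the estimate above guarantees that the values $f(x)$ for $x\nearrow\beta$ accumulate at~$\alpha$, so~$\alpha$ is captured by an interval of length at most $d(\beta-p_*)\le d(q_*-p_*)$. This yields a uniformly c.e.\ open sequence $(V_n)_n$ with $\alpha\in\bigcap_n V_n$ and $\mu(V_n)\le d\,2^{-n}$, and after reindexing by $\lceil\log_2 d\rceil$ this is a Martin-Löf test covering~$\alpha$, contradicting the randomness of~$\alpha$.

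The main obstacle in part (a) is effectivity: $f$ is only guaranteed computable on $[0,\beta)$, we do not know~$\beta$, and a naive attempt to compute~$f$ on rational arguments cannot distinguish arguments below~$\beta$ from those above it. I would resolve this exactly as in the proof of Proposition~\ref{leftce-standalone-equivalence-real}, part $(iii)\Rightarrow(i)$: first monotonize~$f$ by Proposition~\ref{real-monotonization}, so that the monotone nondecreasing witness is computed by a machine~$M$ of Type~2 enjoying property~\eqref{property-P}, and then use~\eqref{property-P} to guarantee that computations of~$M$ on constant inputs $q>\beta$ can only \emph{overshoot} the true values and hence never spuriously enlarge the constructed cover beyond what the Lipschitz bound allows. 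Monotonicity also makes the forward images genuine intervals $[f(p),f(x)]$ of length $f(x)-f(p)\le d(x-p)$, which is what keeps the total length within a factor~$d$ of~$\mu(U_n)$.

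For part (b) the geometry is more favorable and the argument is correspondingly shorter, but the bookkeeping must preserve \emph{computability of the measure}. Here $\alpha\redcllocal\beta$ is witnessed by a Lipschitz function~$f$ that is computable on a full neighborhood $[0,b]$ with $b>\beta$ and satisfies $f(\beta)=\alpha$, so~$\beta$ is an interior point at which~$f$ is genuinely two-sided computable and the obstacle from part (a) disappears. Assuming~$\alpha$ is Schnorr random and that~$\beta$ is \emph{not} Schnorr random, I fix a Schnorr test $(U_n)_n$ (so each $\mu(U_n)$ is uniformly computable and bounded by $2^{-n}$) with $\beta\in\bigcap_n U_n$, and push it forward through~$f$ as before; an interval of length~$\ell$ around~$\beta$ maps into an interval of length at most $d\ell$ around $\alpha=f(\beta)$. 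The only additional point is that, because~$f$ is \emph{total} computable on $[0,b]$, the measures of the resulting sets~$V_n$ are themselves uniformly computable, so $(V_n)_n$ (after reindexing) is a genuine Schnorr test covering~$\alpha$, contradicting the Schnorr randomness of~$\alpha$. The decisive difference between the two parts is thus that $\redcllocal$ supplies a two-sided computable neighborhood of~$\beta$, which both removes the effectivity difficulty of part (a) and secures the computable-measure requirement of Schnorr tests.
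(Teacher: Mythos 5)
Your overall strategy --- push a test covering~$\beta$ forward through the Lipschitz witness, evaluating near left endpoints and using the localized estimate $|f(x)-\alpha|\le d(\beta-x)$ --- is the right one and matches the paper in spirit. But the mechanism you propose for the effectivity problem in part~(a) has a genuine gap. Proposition~\ref{real-monotonization} is stated (and is only true) for $\mathbb{R}$-translation functions, whose values on $[0,\beta)$ lie in $[0,\alpha)$. A witness for $\alpha\redclopen\beta$ is merely a \emph{weakly} $\mathbb{R}$-translation function and may take values above~$\alpha$ on $[0,\beta)$ (add a computable piecewise-linear bump well below~$\beta$ to any witness; the result is still Lipschitz and still a valid $\redclopen$-witness). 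For such an~$f$, the monotonization $\tilde{f}(x)=\max\{f(y):y\in[0,x]\}$ satisfies $\lim_{x\nearrow\beta}\tilde{f}(x)=\sup_{x<\beta}f(x)$, which can strictly exceed~$\alpha$; then $\tilde{f}$ is frozen at the bump value near~$\beta$, the localized estimate fails for~$\tilde{f}$, and the interval you enumerate for the component containing~$\beta$ --- built from $\tilde{f}$-values --- need not contain~$\alpha$. Capture fails at the one place where it matters. Your appeal to property~\eqref{property-P} also mismatches its content: it bounds results of halting computations at arguments above~$\beta$ from \emph{below} only, so it neither limits the measure added by garbage intervals nor helps with capture; in the paper it serves an entirely different purpose, inside Proposition~\ref{leftce-standalone-equivalence-real}.

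The paper's proof needs neither monotonization nor~\eqref{property-P}: it replaces your Martin-Löf test by a \emph{Solovay} test $S_n=[l_n,r_n]$ failing on~$\beta$, so that capture is needed only infinitely often and the total measure only needs to be finite. It queries the machine only at the left endpoints, setting $g(l_n,n)$ to be the output of $M^{(l_n,l_n,\dots)}$ with tolerance $2^{-n}$ and, whenever this halts, enumerating $T_n=[g(l_n,n)\pm(cd_n+2^{-n})]$. Spurious halts at $l_n>\beta$ never have to be detected: they cost at most the same measure, and $\sum_n\mu(T_n)\le 4+2cL$ regardless; at the infinitely many $n$ with $\beta\in S_n$ one has $l_n<\beta$, the computation genuinely approximates $f(l_n)$, and the estimate you correctly derived yields $\alpha\in T_n$. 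Your per-interval scheme would likewise go through if you dropped the monotonization, evaluated only at left endpoints, and either passed to Solovay tests or justified a representation of $U_n$ by disjoint rational intervals of total length at most $2^{-n}$. In part~(b) your plan is essentially the paper's, but two points are glossed: you must first intersect the test with $[0,\rho]$ for a rational $\rho\in(\beta,b)$, since intervals of $U_n$ to the right of~$b$ have no computable image; and the computability of $\mu(V_n)$ is not immediate for an overlapping union of pushforward intervals --- the paper sidesteps this by using total Solovay tests in the sense of Downey and Griffiths, for which the relevant quantity is the sum of the interval lengths, here exactly $4+2cL_{comp}$ because every $T_n$ is defined.
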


\begin{proof}
\begin{enumerate}[(a)]
    \item Let~$\alpha$ and $\beta$, where $\beta$ is Martin-Löf nonrandom, be two reals that fulfill~$\alpha\redclopen\beta$ with a constant~$c$ via a real function~$f$
    computed by a Turing machine~$M$ of Type 2.
    In particular, for every rational~$q<\beta$, $f$ satisfies 
    \begin{equation}\label{eq:f-fulfills-Solovay-distance-property}
        |\alpha - f(q)\downarrow| < c(\beta - q).
    \end{equation}
    We define further a two-argument-function~$g:\mathbb{Q}|_{[0,1)}\times \mathbb{N}\to\mathbb{Q}$ by setting
    \begin{equation}\label{eq:define-g}
        g(q,m)\text{ is the result of }M^{(q,q,\dots)}\text{ with tolerance }2^{-m}\text{ if defined}
    \end{equation}
    Then, for every rational~$q<\beta$ and natural~$m$, we know by the choice of~$M$ that $|f(q)\downarrow - g(q,m)| < 2^{-m}$, hence we obtain by~\eqref{eq:f-fulfills-Solovay-distance-property} that
    \begin{equation}\label{eq:g-fulfills-Solovay-distance-property}
         |\alpha - g(q,m)\downarrow| \leq c(\beta - q) + 2^{-m}
    \end{equation}
    for every $q<\beta$ and $m\in\mathbb{N}$.
    
    Let further~$q_0,q_1,\dots$ be a standard enumeration of rationals on~$[0,1)$.
    
    Since~$\beta$ is Martin-Löf nonrandom, we can further fix a Solovay test~$S_0,S_1,\dots$ that fails on~$\beta$, where we denote $S_n = [l_n,r_n]$ of length $d_n$ for every~$n$. In particular, this test has a finite measure:
    \begin{equation}
        L := \sum_{n\in\mathbb{N}}\mu(S_n) = \sum_{n\in\mathbb{N}}d_n <\infty.
    \end{equation}
    Then we prove that~$\alpha$ should be Martin-Löf nonrandom as well by constructing the following Solovay test:
    for every natural~$n$, compute~$g(l_n,n)$ and, if the computation halts, set 
    \begin{equation}\label{eq:define-T_n}
        T_n = [g(l_n,n) - (cd_n + 2^{-n}), g(l_n,n) + (cd_n + 2^{-n}) ].
    \end{equation}
    Then, the test $T_0,T_1,\dots$ that contains all defined~$T_n$ is computable and has a finite measure by
    \begin{equation*}
        \sum_{n\in\mathbb{N}}\mu(T_n) \leq \sum_{n\in\mathbb{N}} (2^{-n+1} + 2c d_n) = \sum_{n\in\mathbb{N}}2^{-n+1} + 2c\sum_{n\in\mathbb{N}} d_n = 4 + 2cL <\infty.
    \end{equation*}
    Further, for every of infinitely many $n$ such that~$\beta \in S_n$,
    we have~$l_n < \beta < r_n$, hence, by~\eqref{eq:g-fulfills-Solovay-distance-property}, we have~$g(l_n,n)\downarrow$ (hence~$T_n$ is defined) and
    \begin{equation}\label{eq:T_n-contains-alpha}
        |\alpha - g(l_n,n)| \leq c(\beta - l_n) + 2^{-n} < c(r_n - l_n) + 2^{-n} = cd_n + 2^{-n},
    \end{equation}
    which implies that~$\alpha\in T_n$. Therefore, the test~$T_0,T_1,\dots$ that contains all defined~$T_n$ is a Solovay test that fails on~$\alpha$, hence $\alpha$ is Martin-Löf nonrandom.
    
    \item If, in the latter proof, $f$ is computable on some interval $[0,b]$ with $b>\beta$ (i.e.\ $\alpha\redcllocal\beta$ via~$f$), then we can fix some rational $\rho\in (\beta,b)$. Obviously, $f$ is computable on $[0,\rho]$, and the function~$g$ defined as in~\eqref{eq:define-g} fulfills~\eqref{eq:g-fulfills-Solovay-distance-property} for every~$q<\rho$ and~$m\in\mathbb{N}$.

    If, additionally, $S_0,S_1,\dots$ is a total Solovay test that fails on~$\beta$ (which witnesses the Schnorr nonrandomness of $\beta$ by~\cite[Theorem~2.5]{Downey-Griffiths-2002}), then we can modify this test by replacing every~$S_n$ by~$S_n\cap [0,\rho]$; the resulting test is, again, a total Solovay test that fails on~$\beta$ and has a computable measure:
    \begin{equation}
        L_{comp} := \sum_{n\in\mathbb{N}}\mu(S_n) = \sum_{n\in\mathbb{N}}d_n < \infty.
    \end{equation}
    
    For every natural $n$ the interval~$T_n$ as in~\eqref{eq:define-T_n} is well-defined since we know from~$l_n < \rho$ that~$g(l_n,n)\downarrow$.
    Hence, the test~$T_0,T_1,\dots$ has a measure exactly equal to $4+2cL_{comp}$, which is finite and computable. Moreover, for every of infinitely many~$n$ such that~$\beta \in S_n$, it still holds by~\eqref{eq:T_n-contains-alpha} that~$T_n$ is defined and contains~$\alpha$. Therefore, the test~$T_0,T_1,\dots$ is a total Solovay test that fails on~$\alpha$, hence~$\alpha$ is Schnorr nonrandom.
\end{enumerate}
\end{proof}


\section{Main result}\label{section:main-result}
As a main result of this paper, we claim that the reducibilities via $\mathbb{Q}$-translation functions imply the corresponding reducibilities via $\mathbb{R}$-translation functions.
\begin{theorem}\label{main-result}
    For arbitrary reals $\alpha$ and $\beta$, the following implications hold:
    \begin{equation}
        \alpha\redsolovay\beta \implies \alpha\redclopen\beta\makebox[6em]{and}\alpha\redsolovaytotal\beta \implies \alpha\redcllocal\beta.
    \end{equation}
    Moreover, if~$\alpha$ is not computable, then the following implication holds:
    \begin{equation}\label{eq:if-noncomputable}
        \alpha\redsolovay\beta \implies \alpha\redsolovayreal\beta.
    \end{equation}
\end{theorem}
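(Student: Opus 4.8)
The plan is to turn the $\mathbb{Q}$-translation function $g$ witnessing $\alpha\redsolovay\beta$, with Solovay constant $c$, into an $\mathbb{R}$-translation function by Lipschitz regularization. Concretely, I would set
\[ f(x) = \inf\{\, g(q) + c\,|x-q| : q\in\mathrm{LC}(\beta)\,\}, \]
the infimal convolution of $g$ with the cone $c|\cdot|$, taken only over the rationals on which $g$ is governed by the Solovay condition. As an infimum of $c$-Lipschitz functions, $f$ is automatically $c$-Lipschitz wherever finite, and $f(x)\ge 0$ since every term is nonnegative; intuitively, $f$ is the largest $c$-Lipschitz minorant of $g|_{\mathrm{LC}(\beta)}$.

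First I would read off the two defining properties of a (weakly) $\mathbb{R}$-translation function from the Solovay inequalities alone. The upper bound $g(q)<\alpha$, evaluated along rationals $q\nearrow x$ (all below $\beta$ as $x<\beta$), gives $f(x)\le\alpha$. The Solovay lower bound $g(q)>\alpha-c(\beta-q)$ gives, for every $q\in\mathrm{LC}(\beta)$, the inequality $g(q)+c|x-q|>\alpha-c(\beta-x)$ by a one-line computation splitting into the cases $q\le x$ and $q>x$; hence $f(x)\ge\alpha-c(\beta-x)$. Combining, $\alpha-c(\beta-x)\le f(x)\le\alpha$, so $f$ maps $[0,\beta)$ into $[0,\alpha]$ and $\lim_{x\nearrow\beta}f(x)=\alpha$. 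Modulo computability, $f$ then witnesses $\alpha\redclopen\beta$.

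The main obstacle is computability of $f$ as a Type-2 real function on $[0,\beta)$, where the two requirements pull against each other. On one hand, for a partial $g$ the machine can evaluate $g(q)$ safely only for rationals $q$ that the oracle approximation $p_0,p_1,\dots$ certifies to lie below $x$ (hence below $\beta$): for $q>x$ one cannot decide $q<\beta$, and a value $g(q)$ with $q\ge\beta$ is unconstrained and would corrupt the infimum. On the other hand, a computable real function must be continuous, whereas the infimum restricted to $q\le x$ is only upper semicontinuous and genuinely jumps downward. The reconciling fact I would exploit is quantitative: passing from the full infimum to its left-restricted version changes the value by at most $c(\beta-x)$, because every admissible term already exceeds $\alpha-c(\beta-x)$. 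The heart of the proof is therefore a careful Type-2 machine that, driven by the shrinking tolerances $2^{-n}$ of the oracle, dovetails the evaluations of $g$ on rationals provably below $x$, maintains a running Lipschitz interpolation, and produces an effective two-sided approximation whose error is absorbed into this vanishing $O(\beta-x)$ slack together with the known Lipschitz modulus.

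The two remaining implications follow by localizing this construction. If $g$ is total, the hypothesis of $\alpha\redsolovaytotal\beta$, then the partiality obstruction disappears, as $g(q)$ halts for every rational; the same envelope extends to a closed interval $[0,b]$ with rational $b>\beta$, stays $c$-Lipschitz, and satisfies $f(\beta)=\alpha$ by continuity, yielding $\alpha\redcllocal\beta$. For the implication into $\redsolovayreal$ under the assumption that $\alpha$ is not computable, it remains to upgrade $f(x)\le\alpha$ to the strict $f(x)<\alpha$ demanded of a genuine $\mathbb{R}$-translation function: on rationals $q\in\mathrm{LC}(\beta)$ one already has $f(q)\le g(q)<\alpha$, and an equality $f\equiv\alpha$ on any subinterval would make $\alpha=f(q)$ computable at a rational $q$, contradicting the hypothesis. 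I expect the delicate point here, parallel to the computability obstacle above, to be excluding an isolated equality $f(x_0)=\alpha$ at an irrational $x_0<\beta$, which I would handle by a small computable downward perturbation of $f$ that vanishes toward $\beta$, again justified by non-computability of $\alpha$; this then gives $\alpha\redsolovayreal\beta$.
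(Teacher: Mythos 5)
Your envelope is, in skeleton, exactly the paper's construction: the paper's stage-wise function $f(q,n)=\min\{\Tilde{g}(p)+d(q-p):p\in P(q,n)\}$ is a finitary, left-restricted infimal convolution, and your band estimate $\alpha-c(\beta-x)\le f(x)\le\alpha$ is the correct driving inequality. The genuine gap is that everything you defer to ``a careful Type-2 machine'' is where the entire proof lives, and your sketch of it would fail as stated, on two counts. First, at a \emph{fixed} $x<\beta$ the slack $c(\beta-x)$ is a fixed positive quantity, not something that shrinks with the tolerance $2^{-n}$; a Type-2 machine must emit an effective approximation of one definite value, and that value must be the same for \emph{every} effective approximation of $x$ supplied as oracle (extensionality), so a fixed-width ambiguity cannot simply be ``absorbed''. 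Second, the left-restricted infimum you fall back on is, as you yourself note, only upper semicontinuous, with downward jumps of size up to $c(\beta-x)$; a discontinuous function is not Type-2 computable, and no canonical, continuous, oracle-independent interpolation rule is specified. The paper's four-step apparatus is precisely the missing mechanism: a monotonization $\Tilde{g}(q_n)=\max\{g(q_m):m\le n,\ q_m\le q_n\}$, which tames the unconstrained values at rationals above $\beta$ (any $p>\beta$ enumerated at time $t$ satisfies $\Tilde{g}(p)\ge\max$ of $g$ on the left-cut rationals enumerated by time $t$, so $\Tilde{g}(p)\to\alpha$ as $p\searrow\beta$) and thereby permits using points on \emph{both} sides rather than your certified-below-$x$ restriction, which is the very source of the discontinuity; canonical chains $P(q,n)$ of mesh $2^{-n}$ depending only on the pair $(q,n)$, which give extensionality; and a strictly enlarged Lipschitz constant $d=2^{K}>c$, whose slack yields the stage-to-stage bounds $0\le f(q,m)-f(q,n)<2^{-(m-K)}$ that make $h(x)=\lim_{n}f(q_n,n)$ well defined. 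Note also that the paper must treat the left-c.e.\ case separately (via Kumabe et al.) and must use that $\mathrm{dom}(g)$ accumulates at $\beta$ from above when $\beta$ is not left-c.e.; that neither fact surfaces in your sketch is a symptom of the machine being underdetermined.

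The two localizations have concrete defects as well. In the total case, $g$ is defined but completely unconstrained on the rationals in $(\beta,b]$, so the two-sided infimal convolution loses its lower bound to the right of $\beta$ (an early-enumerated rational just above $\beta$ carrying a tiny value of $g$ drags the envelope down), and ``$f(\beta)=\alpha$ by continuity'' does not follow; again the monotonization is what restores two-sided convergence $h(x)\to\alpha$ for $x\to\beta$ within $[0,b)$. For strictness when $\alpha$ is noncomputable, your correct observation that $f(q)\le g(q)<\alpha$ at rationals reduces the problem to irrational equality points, but the proposed fix is doubtful: a \emph{total} computable perturbation $\delta$ that is positive exactly on $[0,\beta)$ and vanishes toward $\beta$ would make $\{q\in\mathbb{Q}:\delta(q)>0\}$ a c.e.\ enumeration of $\mathrm{LC}(\beta)$, so $\beta$ would be left-c.e., contradicting the only case at issue; for a partial $\delta$ the existence is equally unclear and unproven. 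The perturbation is also unnecessary: the paper derives from a single equality $h(\tilde{x})=\alpha$ with $\tilde{x}<\beta$, together with $h\le\alpha$ on $[0,\beta)$ and computability of the maximum operator, that $x\mapsto\max\{h(y):y\in[0,x]\}$ is computable and constantly equal to $\alpha$ on a rational compact subinterval of $(\tilde{x},\beta)$, making $\alpha$ computable --- a contradiction that disposes of isolated irrational equality points with no modification of $h$ at all. You should adopt that argument rather than perturb.
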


\begin{proof}
    Let $g$ be a $\mathbb{Q}$-translation function from~$\beta$ to~$\alpha$, and let $\alpha$ and $\beta$ be two reals such that~$\alpha\redsolovay\beta$ with some constant~$c$ via~$g$. If $\beta$ is left-c.e., then~$\alpha$ is left-c.e.\ as well since the set of left-c.e.\ reals is $\redsolovay$-closed downwards, and the theorem statement holds by~\cite[Theorem~1]{Kumabe-etal-2020}. So, in what follows, suppose that~$\beta$ is not left-c.e., i.e.,~$\mathrm{LC}(\beta)$ is not recursively enumerable.

    Further, let~$b$ be the maximal real that fulfills the property
    \begin{equation}\label{eq:define-b}
        g(q)\downarrow\text{ for every }q < b.
    \end{equation}
    It holds obviously that~$b\geq \beta$ since~$g$ is a translation function from~$\beta$ to~$\alpha$, wherein, in case~$\alpha\redsolovaytotal\beta$, we obtain~$b=1$.
    
    In the scope of the proof, we set an enumeration~$q_0,q_1,\dots$ of the domain of~$g$. If there exists a rational~$b>\beta$ such that there is no~$q_i\in (\beta,b)$, then $\mathrm{LC}(\beta) = \mathrm{dom}(g)\cap [0,\beta)$ is recursively enumerable, a contradiction. Therefore, there exists a subsequence of~$q_0,q_1,\dots$ which tends to $\beta$ from above.
    
    \subsection*{Construction of a weakly $\mathbb{R}$-translation function from~$\beta$ to~$\alpha$}
    Now, we construct a Lipschitz continuous weakly $\mathbb{R}$-translation function $h$ from~$\beta$ to~$\alpha$ in four steps: first, on the base of~$g$, we construct another partial function~$\Tilde{g}$ on rationals that witnesses the Solovay reducibility $\alpha\redsolovay\beta$ with the same constant~$c$; second, on the base of~$\Tilde{g}$, we construct a technical computable two-argument partial function~$f(\cdot,\cdot)$ on rationals; third, on the base of~$f$, we construct another technical two-argument partial function~$\Tilde{f}(\cdot,\cdot)$ on rationals; and fourth, on the base of~$\Tilde{f}$, we finally construct the desired function~$h$.

    The lists of properties of the functions~$\Tilde{g}$ through~$h$ are combined in Claims~\ref{first-claim}--\ref{fourth-claim}, respectively. The proof of each claim can be easily obtained from the previous one through simple calculations.
    
    First, we define a computable function~$\Tilde{g}$ from rationals to rationals by setting
    \begin{equation*}
        Q_n= \{q_m : m\leq n\text{ and }q_m\leq q_n\} \makebox[3em]{and} \Tilde{g}(q_n) = \max\{g(q): q\in Q_n \}\text{ for all }n.
    \end{equation*}
    \begin{claimaaa}\label{first-claim}
        $\Tilde{g}$ satisfies the following properties:
        \begin{align}
            \label{eq:g-Tilde-g-domains-coincide}
            &\mathrm{dom}(\Tilde{g}) = \mathrm{dom}(g),
            \\
            \label{eq:Tilde-g-on-the-left-cut}
            &0 < \alpha - \Tilde{g}(q)\downarrow \leq \alpha - g(q) < c(\beta - q)\text{ for all }q\in \mathrm{LC}(\beta).
            \\
            \label{eq:Tilde-g-is-half-monotone}
            &Q_m\subseteq Q_n\text{ and }\Tilde{g}(q_m)\leq\Tilde{g}(q_n) \text{ for all }q_m,q_n\in \mathrm{dom}(\Tilde{g})\text{ where}\begin{cases}
                m<n,
                \\
                q_m<q_n.
            \end{cases}
        \end{align}
    \end{claimaaa}   
    
    From~\eqref{eq:g-Tilde-g-domains-coincide}, we obtain that, in particular, that~$q_0,q_1,\dots$ is at the same time a computable enumeration of~$\mathrm{dom}(\Tilde{g})$. The property~\eqref{eq:Tilde-g-on-the-left-cut} directly implies that the function~$g$ is a well-defined $\mathbb{Q}$-translation function that witnesses the Solovay reducibility of~$\alpha$ to~$\beta$ with the same constant as~$g$.
    
\medskip
    
    At the beginning of the second step, we fix a rational~$d$ and a natural~$K>1$ such that~$2^K = d>c$ and define a computable two-argument partial function $f:\mathbb{Q}\times\mathbb{N}\to\mathbb{Q}$ as follows:
    given $q$ and $n$, we enumerate $q_0(=0),q_1,\dots$ and, in case if we meet after some enumeration step $t$ a set $\{q_{i_0}(= 0), q_{i_1},\dots,q_{i_k}\}$ where~$i_0,i_1,\dots,i_k\leq t$ such that
    \begin{align}
        \label{eq:P(q,n)-from-0-to-k}
        &0<q_{i_n}-q_{i_{n-1}}<2^{-n}\text{ for all }i\in\{1,\dots,k\}\text{ and}
        \\
        \label{eq:P(q,n)-k}
        &0\leq q - q_{i_k} < 2^{-n},
    \end{align}
    let~$\{q_{i_0},q_{i_1},\dots,q_{i_N}\}$ be the greatest (under inclusion) such set of indices not larger than $t$.
    Such set exists since, if two different sets $A = (q_{i_0}, \dots, q_{i_a})$ and $B = (q_{j_0}, \dots, q_{j_b})$, where $i_0,\dots,i_a,j_0,\dots,j_b \leq t$, fulfill the properties~\eqref{eq:P(q,n)-from-0-to-k}--\eqref{eq:P(q,n)-k}, then the same holds for the set $A\cup B$. 
    
    Next, we define the ordered tuple
    \begin{equation*}
        P(q,n) = (q_{i_0},\dots,q_{i_k})\makebox[3em]{and} f(q,n) = \min\big\{\Tilde{g}(p)+d(q-p) : p \in P(q,n)\big\}.
    \end{equation*}
    Accordingly, we write $P(q,n)\uparrow$ and $f(q,n)\uparrow$ if the search for the set $P(q,n)$ never ends.
    
    Further, we define the (in case~$b=\beta$ coinciding) sets
    \begin{align*}
        &D_{\beta} = \{(q,n) : n\in\mathbb{N}\text{ and }q\in [0,\beta + 2^{-n})\}\text{ and}
        \\
        &D_b = \{(q,n) : n\in\mathbb{N}\text{ and }q\in [0,b + 2^{-n})\},
    \end{align*}
    that obviously fulfill~$D_{\beta}\subseteq D_b$.

    \begin{claimaaa}\label{second-claim}
        $f$ satisfies the following properties:    
        \begin{align}
            \label{eq:f-covers-0-b}
            &P(q,n)\downarrow\text{ and }f(q,n)\downarrow\text{ for all naturals }n\text{ and rationals }q\in [0,b),
            \\
            \label{eq:f-is-defined-on-D-b}
            &P(q,n)\downarrow\text{ and }f(q,n)\downarrow\text{ for all }(q,n)\in D_b,
            \\
            \label{eq:f-is-not-so-far-from-alpha}
            &f(q,n)\downarrow<\alpha + 2^{-(n-K-1)}\text{ for all }(q,n)\in D_{\beta},
            \\
            \label{eq:P-subset}
            &P(p,n)\downarrow\subseteq P(q,n)\text{ for all }(q,n)\in \mathrm{dom}(f)\text{ and }p<q,
            \\
            \label{eq:L-second-argument}
            &\begin{cases}
                P(q,m)\downarrow\subseteq P(q,n)
                \\
                0\leq f(q,m) - f(q,n)\downarrow<2^{-(m-K)}
            \end{cases}\text{for all }(q,n)\in \mathrm{dom}(f)\text{ and }m<n,
            \\
            \label{eq:L-first-argument}
            &f(q,n) - f(p,n)\downarrow< d(q - p)\text{ for all }(q,n)\in \mathrm{dom}(f)\text{ and }p<q,
            \\
            \label{eq:f-has-Solovay-property}
            &\alpha - f(q,n)\downarrow < c(\beta - q)\text{ for every }n\in\mathbb{N}\text{ and }q\in \mathrm{LC}(\beta).
        \end{align}
    \end{claimaaa}

    \medskip

    In the second step, we define a computable two-argument partial function $\Tilde{f}:\mathbb{Q}\times\mathbb{N}\to\mathbb{Q}$ by setting 
    \begin{equation*}
        \Tilde{f}(q,n) = \max\big(\{f(q',n): q'\in P(q,n)\}\cup \{f(q,n)\}\big)
    \end{equation*}
    for all~$(q,n)\in \mathrm{dom}(f)$.
    
    \begin{claimaaa}\label{third-claim}
    $\Tilde{f}$ satisfies the following properties:
        \begin{align}
            \label{eq:f-Tilde-f-domains-coincide}
            &\mathrm{dom}(\Tilde{f}) = \mathrm{dom}(f),
            \\
            \label{eq:Tilde-f>f}
            &\Tilde{f}(q,n)\geq f(q,n)\text{ for all }(q,n)\in\mathrm{dom}(\Tilde{f}),
            \\
            \label{eq:Tilde-f-is-not-so-far-from-alpha}
            &\Tilde{f}(q,n)\downarrow < \alpha + 2^{-(n-K-1)}\text{ for all }(q,n)\in D_{\beta},
            \\
            \label{eq:Tilde-L-second-argument}
            &|\Tilde{f}(q,n) - \Tilde{f}(q,m)| < 2^{-(m-K-1)}\text{ for all }(q,m)\in \mathrm{dom}(f)\text{ and }m<n,
            \\
            \label{eq:Tilde-L-first-argument}
            &|\Tilde{f}(q,n) - \Tilde{f}(p,n)\downarrow| < 2^{-(n-K)}+2^K|q-p|\text{ for all }(q,n)\in\mathrm{dom}(f)\text{ and }p<q,
            \\
            \label{eq:Tilde-f-has-Solovay-property}
            &\alpha - \Tilde{f}(q,n)\downarrow < c(\beta - q)\text{ for every }n\in\mathbb{N}\text{ and }q\in \mathrm{LC}(\beta).
        \end{align}
    \end{claimaaa}
    
    \medskip
    
    In the third step, we define a real function
    \begin{equation*}
        h(x) =\lim\limits_{n\to\infty}f(q_n,n)\text{ for every effective approximation }q_n\underset{n\to\infty}{\to}x.
    \end{equation*}
    
    \begin{claimaaa}\label{fourth-claim}
    $h$ satisfies the following properties:
        \begin{align}
            \label{eq:h-is-well-defined}
            &[0,b)\in \mathrm{dom}(h);
            \\
            \label{eq:h-L}
            &h\text{ is Lipschitz continuous};
            \\
            \label{eq:h-left-to-left}
            &h(x)\leq \alpha\text{ for every }x<\beta;
            \\
            \label{eq:h-cofinal}
            &\lim\limits_{\substack{x\to\beta \\ x \in [0,b)}}h(x) = \alpha;
            \\
            \label{eq:Type-2-computablility}
            &h\text{ is (totally Type 2) computable on }(0,b).
        \end{align}
    \end{claimaaa}

\subsection*{The constructed weakly $\mathbb{R}$-translation function demonstrates the theorem statement}

By~\eqref{eq:h-is-well-defined} through~\eqref{eq:Type-2-computablility}, we obtain that $h$ witnesses $\alpha\redclopen\beta$ and, in case $b=1$, even $\alpha\redcllocal\beta$.

\medskip

Now, in case $\alpha$ is not computable, we even obtain that~$h(x) <\alpha$ for every~$x<\beta$ by contradiction:
assuming~$h(\Tilde{x})= \alpha$ for some real~$\Tilde{x}<\beta$, we can fix two rationals~$\Tilde{p}$ and~$\Tilde{q}$ such that $\Tilde{x}< \Tilde{p} < \Tilde{q} < \beta$. Then function~$\Tilde{h}$ defined on the compact interval~$[\Tilde{p},\Tilde{q}]$ by 
\[\Tilde{h}(x) = \max\{h(y): y\in [0,x]\}\]
is computable on the whole interval~$[\Tilde{p},\Tilde{q}]$ by~\cite[Corollary~6.2.5]{Weihrauch-2000}.
On the other hand, by~\eqref{eq:h-left-to-left}, we have~$h(y)\leq \alpha$ for every~$y\in [0,\beta)$, hence our assumption implies for every $x\in [\Tilde{p},\Tilde{q}]$ that
$\Tilde{h}(x) = \max\{h(y): y\in [0,x]\} = h(\Tilde{x}) = \alpha$;
thus, the function~$\Tilde{h}$ is a constant function defined on~$[\Tilde{p},\Tilde{q}]$ that returns~$\alpha$ for every input. Therefore, $\alpha$ should be computable as a limit point of an effective approximation returned by a Turing machine that computes $\Tilde{h}$ from the (computable) input~$(\frac{1}{2}(\Tilde{p}+\Tilde{q}),\frac{1}{2}(\Tilde{p}+\Tilde{q}),\dots)$. A contradiction.

Thus, $h$ is an $\mathbb{R}$-translation function from $\beta$ to $\alpha$, and we obtain from its Lipschitz continuity that $\alpha\redsolovayreal\beta$. 

\end{proof}

On the other hand, by Proposition~\ref{eq:1-solovay-reducible-to-nonleft-ce}, there exists a nonleft-c.e.\ real~$\beta$ such that $1$ is Solovay reducible to it, while, by Corollary~\ref{leftce-standalone-real}, as a computable real, $1$ cannot be real Solovay reducible to~$\beta$. These two observations imply together the following result.
\begin{proposition}
    There exists a computable real~$\alpha$ and a nonleft-c.e.\ real~$\beta$ such that
    $\alpha\redsolovay\beta$ and $\alpha\notredsolovayreal\beta.$
\end{proposition}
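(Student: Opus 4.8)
The plan is to produce the witnessing pair directly, taking $\alpha=1$ as the computable real, since a matching $\beta$ is already available. First I would apply Proposition~\ref{eq:1-solovay-reducible-to-nonleft-ce} to fix a nonleft-c.e.\ real $\beta$ with $1\redsolovay\beta$. This single step already supplies three of the four requirements: $\alpha=1$ is computable, $\beta$ is nonleft-c.e., and the positive reduction $\alpha\redsolovay\beta$ holds.

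It then remains only to establish $1\notredsolovayreal\beta$, which I would argue by contradiction. Suppose $1\redsolovayreal\beta$. As $1$ is computable and hence left-c.e., the upward $\redsolovayreal$-closure of the left-c.e.\ reals (Corollary~\ref{leftce-standalone-real}) would force $\beta$ to be left-c.e., contradicting the choice of $\beta$. Consequently $1\notredsolovayreal\beta$, and the pair $(1,\beta)$ witnesses the proposition.

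Since the argument merely combines two results already proved, I do not expect a substantive obstacle. The one point demanding attention is the direction of the closure: by Definition~\ref{define-real-Solovay} the reduction $1\redsolovayreal\beta$ furnishes a Lipschitz $\mathbb{R}$-translation function \emph{from $\beta$ to $1$}, so it is the upward half of Corollary~\ref{leftce-standalone-real}---equivalently, the implication from statement~(iii) to statement~(i) of Proposition~\ref{leftce-standalone-equivalence-real} with the left-c.e.\ real in the role of~$\alpha$---that must be invoked. Making sure this direction, rather than its converse, is applied is the only place where a careless slip could creep in, but it becomes immediate once the definitions are unwound.
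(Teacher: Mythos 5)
Your proposal is correct and matches the paper's own argument exactly: the paper likewise takes $\alpha=1$ with the real $\beta$ from Proposition~\ref{eq:1-solovay-reducible-to-nonleft-ce} and rules out $1\redsolovayreal\beta$ via the upward $\redsolovayreal$-closure of the left-c.e.\ reals (Corollary~\ref{leftce-standalone-real}, i.e.\ the implication (iii)$\implies$(i) of Proposition~\ref{leftce-standalone-equivalence-real}). Your explicit check that the closure is applied in the correct direction is a sound precaution and consistent with the paper.
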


\medskip

The implication~$\redsolovay\implies\redclopen$ is strict on~$\mathbb{R}$ since computable reals form the least degree on~$\mathbb{R}$ relative to~$\redclopen$ by Proposition~\ref{computable-form-least-degree} but not relative to~$\redsolovay$ by \cite[Proposition~9.6.1]{Downey-Hirschfeldt-2010}.
We still do not know whether the implication~$\redsolovay\implies\redsolovayreal$ is strict on noncomputable reals.

The implication~$\redsolovaytotal\implies\redcllocal$ is strict since computable reals form the least degree on~$\mathbb{R}$ (and thus also on the set of left-c.e.\ reals) relative to~$\redcllocal$ by Proposition~\ref{computable-form-least-degree} but not relative to~$\redsolovaytotal$ by \cite[Proposition~2.5]{Merkle-Titov-2022}.

\section*{Acknowledgments}
I would like to thank Peter Hertling, Wolfgang Merkle, and Laurent Bienvenu for meticulous proofreading of this article, as well as Subin Pulari, Andrei Romashchenko, and Alexander Shen for attending an unofficial presentation and fixing some logical errors.

\end{document}